\theoremstyle{plain}
\newtheorem{theorem}{Theorem}[section]
\newtheorem{definition}[theorem]{Definition}
\newtheorem{lemma}[theorem]{Lemma}
\newtheorem{prop}[theorem]{Proposition}
\newtheorem{cor}[theorem]{Corollary}
\newtheorem{rem}[theorem]{Remark}
\newcommand{\lb}{[\cdot,\cdot]}
\newcommand{\ad}{{\operatorname{ad}}}
\newcommand{\mm}{{\operatorname{M}}}
\newcommand{\la}{\langle}
\newcommand{\ra}{\rangle}
\newcommand{\ip}{\la \cdot, \cdot \ra}
\newcommand{\tr}{\operatorname{tr}}
\newcommand{\g}{\mathfrak{g}}
\newcommand{\h}{\mathfrak{h}}
\newcommand{\n}{\mathfrak{n}}
\newcommand{\s}{\mathfrak{s}}
\renewcommand{\r}{\mathfrak{r}}
\renewcommand{\a}{\mathfrak{a}}
\newcommand\C{{\mathbb C}}
\newcommand\R{{\mathbb R}}
\newcommand{\N}{\nabla}
\title[Expanding solitons to the HCF on complex Lie groups]{Expanding solitons to the Hermitian curvature flow on complex Lie groups}
\subjclass[2010]{Primary 53C15; Secondary 53B15, 53C30, 53C44}
\thanks{This work was supported by G.N.S.A.G.A. of I.N.d.A.M} 
\author{Mattia Pujia}
\address{Dipartimento di Matematica G. Peano, Universit\`a di Torino, Via Carlo Alberto 10, 10123 Torino, Italy}
\email{mattia.pujia@unito.it}
\date{\today}
\begin{document}

\begin{abstract}We investigate the algebraic structure of complex Lie groups equipped with left-invariant metrics which are expanding semi-algebraic solitons to the Hermitian curvature flow (HCF). We show that the Lie algebras of such Lie groups decompose in the semidirect product of a reductive Lie subalgebra with their nilradicals. Furthermore, we give a structural result concerning expanding semi-algebraic solitons on complex Lie groups. It turns out that the restriction of the soliton metric to the nilradical is also an expanding algebraic soliton and we explain how to construct expanding solitons on complex Lie groups starting from expanding solitons on their nilradicals. 
\end{abstract}

\maketitle

\section{Introduction} 
In 2011 Streets and Tian introduced a new flow of Hermitian metrics called {\em Hermitian curvature flow} (HCF) \cite{ST2}. The flow evolves an initial metric in the direction of a Ricci-type tensor of the Chern curvature modified with some first order terms in the torsion. The defining equation is strictly parabolic and when the initial metric is K\"ahler the HCF reduces to the K\"ahler-Ricci flow.
\medskip

The flow is defined as follow. Let $(M,g)$ be a Hermitian manifold of complex dimension $n$, with Chern connection $\N$ and Chern curvature tensor $\Omega$. Let $S_{i\bar j}=g^{l\bar k}\Omega_{l\bar k i \bar j}$ be the $(1,1)$-tensor obtained by contracting $\Omega$ in the first two entries and let
\begin{equation*}\label{eqn_K}
K(g):=S(g)-Q(g)\,,
\end{equation*}
where $Q$ is a $(1,1)$-tensor quadratic in the torsion components (see \cite{ST2} for the precise definition of $Q$). Then HCF is defined by 
\begin{equation}\label{HCF}
\partial_tg_t=-K(g_t)\,,\qquad g_{|t=0}=g_0,
\end{equation}
where $g_0$ is an initial Hermitian metric on $M$. In \cite{ST2} the tensor $Q$ is chosen in order to make the flow satisfying a gradient-type equation. Nevertheless, since $Q(g)$ contains only first order terms in $g$, different choices of $Q$ do not affect the parabolicity of the flow and these lead to different interesting flows (see e.g. \cite{S}, \cite{S2}, \cite{Ssolitons}, \cite{ST}, \cite{ST3}, \cite{ST4}, \cite{yuri}, \cite{yuri2}, and the references therein). 

\medskip
In this paper we focus on {\em soliton} solutions to the HCF, i.e. Hermitian metrics satisfying  
\begin{equation}\label{eqn_soliton}
	K(g)=c g+\mathcal{L}_X g,
\end{equation}
for some $c \in \R$ and a complete holomorphic vector field $X$, where $\mathcal L$ denotes the Lie derivative. By definition, $K$ is both scale invariant and diffeomorphisms equivariant. Therefore, if $g_0$ satisfies \eqref{eqn_soliton} then the solution to \eqref{HCF} safitisfies $g_t = s(t) \, \varphi_t^* g_0$, where $s(t) > 0$ and $\varphi_t : M \to M$ are respectively a smooth scaling function and a one-parameter family of biholomorphisms.

\medskip
In complex Lie groups context it is quite natural to focus on \emph{semi-algebraic} solitons, which are left-invariant metrics $g_0$ such that the solutions to \eqref{HCF} have the form $g_t = s(t) \, \varphi_t^* g_0$ and $\varphi_t$ is a Lie group automorphism for every $t$. If further $\partial_t \varphi_t^{-1}|_{t=0}$ is $g$-self-adjoint, then $g$ is called an {\em algebraic} soliton. Semi-algebraic and algebraic solitons of other flows have been studied in \cite{ALpluri}, \cite{Jab15}, \cite{lauret1}, \cite{Lau16}, \cite{Lau11}.

\medskip
Now we state the main result of the paper. Let $(G,g)$ be a complex Lie group equipped with a left-invariant Hermitian metric and consider the orthogonal splitting of its Lie algebra $\g$ in
$$\g=\r\oplus\n\,,$$
where $\n$ is the nilradical of $\g$. Denote by $g_\n$ the pull-back of $g$ to the Lie group $N$ of $\n$. Then we have

\begin{theorem}\label{main_theo} The metric $g$ is an expanding (i.e. $c<0$) semi-algebraic soliton to HCF if and only if $g_\n$ is an expanding algebraic soliton to HCF on $N$, $\r$ is a reductive Lie subalgebra, $\sum[{\rm ad}_{r_i}|_\n,{\rm ad}_{\bar r_i}^t|_\n]=0$ for any unitary basis $\{r_i\}$ of $\r$, and 
$$K(g_\r)(X,\bar Y)=c g_\r(X,\bar Y)+\frac 1{2}\tr (\ad_X|_\n \ad_{\bar Y}^t|_\n)-\frac 12 \tr \ad_X\cdot\tr\ad_{\bar Y}\,,$$
for any $X,Y\in \r$, where $g_\r$ is the pull-back of $g$ to the Lie group of $\r$.
\end{theorem}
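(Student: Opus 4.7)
The plan is to rephrase the semi-algebraic soliton condition as a Lie-algebraic identity at the identity element and to analyze it block by block with respect to the orthogonal decomposition $\g=\r\oplus\n$. A left-invariant $g$ is a semi-algebraic soliton to HCF if and only if there exists a derivation $D\in\mathrm{Der}(\g)$ such that, at the identity,
\[
K(g)(X,\bar Y)=c\,g(X,\bar Y)+\tfrac12\, g(DX,\bar Y)+\tfrac12\, g(X,\overline{DY}),
\]
and $g$ is algebraic if and only if $D$ can additionally be chosen $g$-self-adjoint. Since $\n$ is a characteristic ideal, any such $D$ satisfies $D(\n)\subset\n$, whereas $\r$, being only defined as the orthogonal complement of $\n$, is not a priori $D$-invariant.

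I would then substitute the standard expression for $K(g)$ on a left-invariant Hermitian metric on a complex Lie group, writing $S$ and $Q$ in terms of $\ad$ and its metric adjoint $\ad^t$, and split the identity into the three blocks $\n\times\bar{\n}$, $\r\times\bar{\n}$, and $\r\times\bar{\r}$. Using $D(\n)\subset\n$ together with Hermitian symmetry, the $\n\times\bar{\n}$ block should collapse to the algebraic soliton equation for $g_\n$ on $N$, with derivation $D|_\n$, whose self-adjointness follows from a standard symmetrization argument. Once the contributions to $\tr(\ad\,\ad^t)$ are separated according to whether they come from $\r$ or from $\n$, the $\r\times\bar{\r}$ block should yield the prescribed formula for $K(g_\r)$; in particular, the squared-trace term $-\tfrac12\tr\ad_X\cdot\tr\ad_{\bar Y}$ arises from the torsion-quadratic tensor $Q$. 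Finally, the mixed $\r\times\bar{\n}$ block should force both that $\r$ is bracket-closed and that $\sum[\ad_{r_i}|_\n,\ad_{\bar r_i}^t|_\n]=0$; reductivity of $\r$ would then follow by combining this commutator identity with the $\r\times\bar{\r}$ equation, since a nonzero solvable ideal of $\r$ would be incompatible with both.

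For the converse I would start from the four conditions, take the self-adjoint derivation $D_\n$ of $\n$ supplied by the algebraic soliton on $N$, and extend it to $\g$ by an operator on $\r$ prescribed by the $K(g_\r)$ identity. The commutator condition together with reductivity of $\r$ should guarantee that this extension respects all brackets of $\g$, so that $D\in\mathrm{Der}(\g)$, and substituting it back recovers the full soliton equation on each block. The main obstacle will be the mixed $\r\times\bar{\n}$ block: a single tensorial identity must simultaneously yield the subalgebra property of $\r$, the vanishing of the commutator sum, and the correct extension of $D_\n$ to a derivation of $\g$. Disentangling these three consequences seems to require a careful interplay of the Jacobi identity, the Hermitian symmetry of $K(g)$, and trace arguments exploiting that derivations preserve the nilradical even when they do not preserve its orthogonal complement.
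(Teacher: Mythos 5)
There is a genuine gap in the forward direction. Your plan is to substitute the formula for $K(g)$ and read off the four conclusions block by block from the soliton identity, with the mixed $\r\times\bar\n$ block ``forcing'' $[\r,\r]\subset\r$, the vanishing of $\sum[\ad_{r_i}|_\n,\ad_{\bar r_i}^t|_\n]$, and (together with the $\r\times\bar\r$ block) the reductivity of $\r$. This will not work as stated. The derivation $D$ preserves $\n$ but not $\r$, so the $\r\times\bar\n$ block of the soliton equation involves the unknown off-diagonal component of $D$ as a free parameter; it is an equation relating $\lambda_1:=[\cdot,\cdot]|_{\r\times\r}$ projected to $\n$, the mixed block of $\mm$, and $D|_{\r\to\n}$, and by itself it cannot force $\lambda_1=0$. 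Moreover the blocks are not independent: by Lemma~\ref{lem_M} the $\n\times\bar\n$ block of $\mm$ equals $\mm_\n+\tfrac12\sum[\ad_{r_i}|_\n,\ad_{r_i}^t|_\n]$, so you cannot conclude that $g_\n$ is a soliton on $N$ before you already know the commutator sum vanishes. These structural facts are exactly the hard content of the theorem, and they are the analogue of the ``standardness'' problem for Einstein solvmanifolds, which resisted elementary block computations and was resolved only by real GIT.

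The paper's proof supplies what your outline is missing: it places the nilradical bracket $\mu$ in a stratum $S_\beta$ of the variety of Lie brackets (Theorem~\ref{prop_rest}), uses the moment map identity $\la \mm_\mu,E\ra=\tfrac14\la\pi(E)\mu,\mu\ra$ tested against the auxiliary endomorphisms $E_\beta$ and $E|_\n=I$, and combines the resulting trace inequalities with Cauchy--Schwarz to force $\lambda_1=0$ and $S(\ad_H+D)=tE_\beta$ (Proposition~\ref{prop_strut}); reductivity and the derivation property of $\beta+\lVert\beta\rVert^2 I$ then come from the equality cases in \eqref{cond_brac} and \eqref{rest_2}. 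The vanishing of $\sum[\ad_{r_i}|_\n,\ad_{r_i}^t|_\n]$ is likewise not read off from a block of the soliton equation: it is extracted in Lemma~\ref{lem_ad} by tracing the identity \eqref{sum_0} against $\mm_\n$, recognizing $\tr\,\mm_\n[\ad_{r_i}|_\n,\ad_{r_i}^t|_\n]=\tfrac14\lVert\pi(\ad_{r_i}^t|_\n)\mu\rVert^2\ge 0$, and invoking the stratification inequality \eqref{cond_m} to kill each nonnegative term. Your converse direction is closer to the paper's (extend the soliton derivation of $\n$ by an operator determined on $\r$ and check it is a derivation), but even there the commutation $[D_1,\ad_X|_\n]=0$ is obtained from the fact that $\mm_\n$ commutes with derivations whose transposes are derivations, not from reductivity alone. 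Without some substitute for the GIT stratification input, the forward direction of your argument does not go through.
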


\medskip 
Note that if $G$ is unimodular, then the expression of $K(g_\r)$ in Theorem \ref{main_theo} simplifies to
$$K(g_\r)(X,\bar Y)=c g_\r(X,\bar Y)+\frac 1{2}\tr (\ad_X|_\n \ad_{\bar Y}^t|_\n)\,.$$
Our interest in {\em expanding} algebraic solitons on complex unimodular Lie groups comes from \cite{LPV}, where it is proved that expanding algebraic solitons on such Lie groups are limit points to the \emph{normalized} HCF. Indeed, when $(G,g_0)$ is a complex unimodular Lie group equipped with a left-invariant metric, the solution $g_t$ to the HCF starting from $g_0$ is defined for every positive $t$ and ${(G,(1+t)^{-1}g_t)}$ converges in Cheeger-Gromov sense to $(\bar G,\bar g)$, where $\bar G$ is a complex unimodular Lie group and $\bar g$ is an algebraic soliton.

Next we observe that in the solvable case we can improve Theorem \ref{main_theo} by giving an explicit description of $g_\r$.

\begin{cor}\label{main_theo1} Assume $G$ unimodular and solvable. Then, $g$ is an expanding algebraic soliton to HCF if and only if $g_\n$ is an expanding algebraic soliton to HCF on $N$, the Lie group $G$ is standard (i.e. $\g=\r\oplus\n$ with $\r$ abelian), $\sum[{\rm ad}_{r_i}|_\n,{\rm ad}_{\bar r_i}^t|_\n]=0$ for any unitary basis $\{r_i\}$ of $\r$,, and 
$$g_\r(X,\bar Y)=-\frac 1{2c}\tr (\ad_X|_\n \ad_{\bar Y}^t|_\n)\,,$$
for any $X,Y\in \r$.
\end{cor}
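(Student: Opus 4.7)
The plan is to derive Corollary~\ref{main_theo1} by specializing Theorem~\ref{main_theo} to the unimodular solvable setting, where the structure of $\r$ collapses dramatically. I will treat the two implications in parallel.

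For the ``only if'' direction, I observe that an algebraic soliton is in particular semi-algebraic, so Theorem~\ref{main_theo} applies and provides the decomposition $\g=\r\oplus\n$ with $\r$ reductive, the commuting condition on any unitary basis of $\r$, and an equation for $K(g_\r)$. Since $G$ is solvable, the subalgebra $\r$ is solvable, and being also reductive it must be abelian; hence $G$ is standard. The unimodularity of $G$ forces $\tr\ad_X=0$ for every $X\in\g$, as already noted in the remark following Theorem~\ref{main_theo}. Moreover, because $\r$ is abelian, the associated simply connected complex Lie group is isomorphic to $(\C^{\dim_\C\r},+)$, so any left-invariant Hermitian metric on it is flat K\"ahler, and in particular $K(g_\r)\equiv 0$. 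Substituting into the equation from Theorem~\ref{main_theo} gives
\[
0=c\,g_\r(X,\bar Y)+\tfrac{1}{2}\tr(\ad_X|_\n\,\ad_{\bar Y}^t|_\n),
\]
which, since $c<0$, solves to the claimed formula for $g_\r$.

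For the ``if'' direction, I would reverse the argument. Abelianness of $\r$ makes $\r$ trivially reductive and forces $K(g_\r)=0$; unimodularity kills the trace-of-$\ad$ term; combining these with the assumed formula for $g_\r$ reproduces exactly the equation required in Theorem~\ref{main_theo}. Together with the hypotheses on $g_\n$ and the commuting condition, Theorem~\ref{main_theo} yields that $g$ is an expanding semi-algebraic soliton. It then remains to upgrade this to algebraic, i.e.\ to verify that the derivation $D\in\mathrm{Der}(\g)$ produced by Theorem~\ref{main_theo} is $g$-self-adjoint. I would expect $D|_\n$ to coincide with the self-adjoint derivation making $g_\n$ an algebraic soliton, while on $\r$ the vanishing of $K(g_\r)$ together with the explicit formula for $g_\r$ should force $D|_\r$ to be self-adjoint as well.

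The main obstacle I anticipate is precisely this promotion from semi-algebraic to algebraic in the converse direction; everything else reduces to the three structural facts (i) solvable plus reductive equals abelian, (ii) abelian complex Lie groups are Chern-flat, and (iii) unimodularity annihilates $\tr\ad$. Once these simplifications are in place, the formula for $g_\r$ is uniquely determined by the Theorem~\ref{main_theo} identity, and the argument is essentially linear algebra on the algebraic soliton datum $(g_\n,D|_\n)$ extended to $\g$.
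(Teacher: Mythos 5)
Your proposal is correct and follows essentially the same route as the paper, whose entire argument is the observation that in the solvable case the reductive subalgebra $\r$ is abelian, hence $K(g_\r)=0$, so the formula for $g_\r$ drops out of Theorem~\ref{main_theo}. The one obstacle you flag --- upgrading semi-algebraic to algebraic in the converse --- dissolves immediately via the paper's subsequent remark: unimodularity gives $H=0$, so the derivation constructed in the proof of Proposition~\ref{prop_lie} is $D=\left[\begin{smallmatrix}0&0\\0&D_1\end{smallmatrix}\right]$ with $D_1$ the $g_\n$-self-adjoint derivation of the nilradical, hence $D$ is $g$-self-adjoint.
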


\medskip
The proof of Theorem \ref{main_theo} is mainly based on {\em real geometric invariant theory} (GIT), in the same fashion as in \cite{Lau10}.

\medskip
Similar results, concerning the Ricci flow on different homogeneous spaces, can be found in \cite{LafLauJDG} and \cite{Lau11}. However, as pointed out by Lafuente and Lauret in \cite{LafLauJDG}, for the Ricci flow there is a limitation given by Alekseevskii’s conjecture. Indeed, if Alekseevskii’s conjecture is confirmed, then any Ricci flow expanding algebraic soliton $(G/H,g)$ should be diffeomorphic to an Euclidean space \cite{Jab15} and thus, accordingly, only solvmanifolds could admit expanding algebraic solitons to the Ricci flow. In the HCF case such a limitation does not exist. As shown in \cite{LPV}, also semisimple complex Lie groups admit soliton metrics. Specifically, a complex Lie group $G$ admits a left-invariant static Hermitian metric,  i.e. a metric satisfying the Einstein-type equation 
\begin{equation*}\label{eqn_static}
K(g)=c\, g,
\end{equation*}
for some constant $c \in \mathbb{R}$, if and only if the group is semisimple and the static metric is the `{\em canonical metric}' induced by the Killing form. Hence, we have a wider set of expanding algebraic solitons for the HCF, with algebraic structures completely classified by Theorem \ref{main_theo} in the case of complex Lie groups.

\medskip
The paper is organized as follows. In Section 2 we recall main results about HCF on complex Lie groups and GIT on Lie groups. In Section 3 we prove Theorem \ref{main_theo} and its corollary. Finally, in Section 4 we apply Corollary \ref{main_theo1} to construct explicit examples of  expanding algebraic solitons to HCF on 4-dimensional solvable complex  unimodular Lie groups. 

\bigskip
\noindent {\bf Notation and conventions.} By a \emph{complex Lie group} we mean a Lie group endowed with a bi-invariant complex structure (i.e. the multiplication is a holomorphic map).

\bigskip
\noindent {\bf Acknowledgments.} The research of the present paper was originated by some conversations with Jorge Lauret, during a visiting period of the author at FaMAF (Cordoba). The author is very grateful to Lauret for many useful suggestions and insights on the problems studied in the paper, and to Luigi Vezzoni for his comments on a preliminary version of the paper. The author would like to thank the referee for him/her constructive comments, which helped to improve the paper.

\section{HCF and GIT results}
In this section we recall some results on the HCF on complex Lie groups and GIT which will be useful in the sequel. 

\subsection{HCF on complex Lie groups}
The following proposition characterizes the HCF tensor on complex Lie groups.

\begin{prop}\cite{LPV} Let $G$ be a complex Lie group equipped with a left-invariant Hermitian metric $g$. Then
\begin{equation}\label{K_tens}
K(g)={\rm Ric}^{1,1} + \hat Q \,,
\end{equation}
where ${\rm Ric}^{1,1}$ is the $(1,1)$-part of the Ricci tensor of $g$ and
$$\hat Q(Z,\bar W) := \frac 12 \tr  \ad_Z \cdot \tr \ad_{\bar W}\,.$$
Here, $Z,W$ are left-invariant vector fields of type (1,0).
\end{prop}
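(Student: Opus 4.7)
The plan begins with the structural observation that on a complex Lie group, the Chern curvature of any left-invariant Hermitian metric vanishes identically, reducing the identity $K(g)=\mathrm{Ric}^{1,1}+\hat Q$ to the purely Lie-algebraic statement $-Q(g)=\mathrm{Ric}^{1,1}(g)+\hat Q$.

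To establish the vanishing of $\Omega$, I would first verify that bi-invariance of $J$ is equivalent to the commutation $[\g^{1,0},\g^{0,1}]=0$, by expanding $[X-iJX,\,Y+iJY]$ and using $[JX,Y]=J[X,Y]$. For left-invariant $Z\in\g^{1,0}$ and $\bar V\in\g^{0,1}$, the $(1,1)$-torsion equation $T(Z,\bar V)=\N_Z\bar V-\N_{\bar V}Z-[Z,\bar V]=0$ combined with $\N J=0$ forces $\N_Z\bar V=\N_{\bar V}Z=0$, since these lie in complementary eigenspaces of $J$. Metric compatibility together with the constancy of $g(W,\bar V)$ on left-invariant fields then yields $g(\N_Z W,\bar V)=0$ for every $V\in\g^{1,0}$, and non-degeneracy of $g$ on $\g^{1,0}\times\g^{0,1}$ gives $\N_Z W=0$. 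The conjugate argument handles $\N_{\bar Z}\bar W$, so $\N$ is trivial on all left-invariant fields, whence $\Omega\equiv 0$ and $S(g)\equiv 0$.

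Next I would compute both sides of $-Q(g)=\mathrm{Ric}^{1,1}+\hat Q$ as explicit polynomials in the structure constants. From the triviality of $\N$ on left-invariant fields, the Chern torsion satisfies $T(Z,W)=-[Z,W]$ for $Z,W\in\g^{1,0}$, so the quadratic tensor $Q(g)$ of \cite{ST2} becomes a concrete combination of $\la[Z,\cdot],[\bar W,\cdot]\ra$-type contractions. On the other side I would apply the classical formula for the Ricci tensor of a left-invariant metric in terms of structure constants and the mean curvature vector $H$, defined by $\la H,\cdot\ra=\tr\ad_{(\cdot)}$, extend $\C$-bilinearly, and project onto the $(1,1)$-piece on $\g^{1,0}\times\g^{0,1}$. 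The vanishing $[\g^{1,0},\g^{0,1}]=0$ eliminates several mixed terms when a real orthonormal basis is re-expressed in a unitary $(1,0)$-frame.

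Matching coefficients then completes the proof: the $H$-terms in the Ricci formula contribute precisely $-\tfrac12\tr\ad_Z\cdot\tr\ad_{\bar W}$, which cancels against $\hat Q=+\tfrac12\tr\ad_Z\cdot\tr\ad_{\bar W}$, while the remaining quadratic torsion contractions are identified with $-Q(g)$ by direct comparison. The principal obstacle I anticipate is purely combinatorial: aligning factor conventions between the real-bracket Ricci formula, the complex-bracket Chern torsion, and the precise contraction defining $Q$ in \cite{ST2}, so that the $(1,1)$-projection of the Ricci expression matches $-Q-\hat Q$ term by term. Once these conventions are unified, the verification is routine.
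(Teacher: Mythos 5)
Your overall strategy is the correct one and is essentially the proof given in the cited source \cite{LPV} (the present paper only quotes the result without proof): bi-invariance of $J$ gives $[\g^{1,0},\g^{0,1}]=0$, from which the Chern connection vanishes on left-invariant frames, so $\Omega\equiv 0$, $S(g)\equiv 0$, $T(Z,W)=-[Z,W]$, and the identity reduces to matching $-Q(g)$ against $\mathrm{Ric}^{1,1}+\hat Q=\mm-S(\ad_H)+\hat Q$ as polynomials in the structure constants. That first part of your argument is complete and correct.

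The gap is in the anticipated endgame, which as stated would fail. You claim the mean-curvature terms of the Ricci formula contribute $-\tfrac12\tr\ad_Z\cdot\tr\ad_{\bar W}$ and cancel $\hat Q$, leaving the remaining identification $-Q=\mm$. This is false: $S(\ad_H)(Z,\bar W)=\tfrac12\bigl(g([H,Z],\bar W)+g([H,\bar W],Z)\bigr)$ is built from $\ad_H$, not from the rank-one tensor $g(H,\cdot)\otimes g(H,\cdot)$, and the two do not coincide. Concretely, for $[W_1,W_2]=W_2$ with $\{W_1,W_2\}$ unitary one finds $H=W_1+\bar W_1$, hence $[H,W_1]=[\bar W_1,W_1]=0$ and $S(\ad_H)(W_1,\bar W_1)=0$ while $\hat Q(W_1,\bar W_1)=\tfrac12$; likewise $S(\ad_H)(W_2,\bar W_2)=1$ while $\hat Q(W_2,\bar W_2)=0$. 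Moreover $-Q=\mm$ cannot hold for non-unimodular groups, since $Q$ contains the contraction $g^{k\bar l}T_{ik\bar l}\cdot g^{m\bar n}T_{\bar j\bar n m}$ and $g^{k\bar l}T_{ik\bar l}=-\tr\ad_{Z_i}$, so that term survives. The correct bookkeeping is the opposite of what you propose: the term of $-Q$ quadratic in the trace of the torsion produces $+\hat Q$, the terms linear in the trace of the torsion produce $-S(\ad_H)$, and the remaining contractions produce $\mm$; nothing cancels against $\hat Q$. Since this term-by-term identification is the entire content of the second half of the proof and your proposed identification is wrong, this is a genuine gap, although the strategy itself is sound and closes once the matching is redone.
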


It is well known (see e.g. \cite{Besse}) that the Ricci tensor of a left-invariant metric $g$ on a Lie group $G$ can be written as
$${\rm Ric}= \mm-\frac 12 {\rm B}-S(\ad_H)\,,$$
where, for any $X,Y$ in the Lie algebra $(\g,\mu)$ of $G$,
\begin{equation}\label{M_tens}
{\rm M}(X,Y)=-\frac 12  \, \sum_k g( \mu(X,X_k),\mu(Y,X_k))+\frac 14  \sum_{k,j}\, g( \mu(X_k,X_j),X) g(\mu(X_k,X_j),Y)\,.
\end{equation}
Here $\{X_r\}$ denotes an orthonormal basis of $g$; $H$ is the {\em mean curvature vector} given by the relation $g(H,X)= \tr \ad_X$, for any $X\in\g$, and 
$$S(\ad_H)(X,Y) = \frac 12 (g(\mu(H,X),Y)+g(\mu(H,Y),X))\,;$$
${\rm B}(X,Y)=\tr (\ad_X\ad_Y)$ is the Killing form of $\g$. If further $G$ is a complex Lie group and $g$ is a left-invariant Hermitian metric $g$, then the (1,1)-part of the Ricci tensor satisfies
$${\rm Ric}^{1,1} = \mm - S(\ad_H)\,.$$  
Finally, when the Lie group $G$ is unimodular ${\rm Ric}^{1,1}=\mm$, since the $S(\ad_H)$-term vanishes.

\medskip
Although our goal is to study solutions to the HCF on complex Lie groups, our results hold true for left-invariant solutions $g_t$ to the ${\sf K}$-\emph{flow}
\begin{equation}\label{M_flow}
\partial_tg_t=-{\sf K}(g_t)\,,\qquad g_{|t=0}=g_0\,,
\end{equation} 
on Lie groups, where 
\begin{equation}\label{K-ten} {\sf K}(g) := \mm - S(\ad_H) +\hat Q\end{equation}
and $\hat Q(X,Y):= \frac 12 \tr \ad_X\cdot \tr \ad_Y$. From now on we focus on this more general setting and we obtain the results stated in the introduction as special cases. 
\begin{definition}\label{def_sol}
A left-invariant metric $g$ on a Lie group $G$ is a {\rm semi-algebraic ${\sf K}$-soliton} if its ${\sf K}$-tensor \eqref{K-ten} satisfies
\begin{equation*}
	{\sf K}(g) = c\, g + \frac 12 \left(g( D \cdot, \cdot) + g(\cdot, D \cdot)\right), \qquad c \in \mathbb{R}, \quad D \in {\rm Der}(\g).
\end{equation*}
If further $D^t\in{\rm Der}(\g)$, then the soliton is \emph{algebraic}.
\end{definition}
Note that, we can regard ${\sf K}(g)$ as an endomorphism
$${\sf K}_g:\g\to\g$$
of the Lie algebra of $G$ via $g({\sf K}_g\cdot,\cdot)={\sf K}(g)(\cdot,\cdot)$. Thus, the semi-algebraic ${\sf K}$-soliton condition can be written in terms of $\sf K_g$ as 
$${\sf K}_g = c\,I+\frac 12(D+D^t), \qquad c \in \mathbb{R}, \quad D \in {\rm Der}(\g)\,.$$

\begin{rem}\rm Every semi-algebraic ${\sf K}$-soliton is a soliton in the usual sense. Indeed, if $G$ is a simply-connected Lie group, then the solution to \eqref{M_flow} starting from a semi-algebraic ${\sf K}$-soliton $g$ is $g_t=(-c\, t+1)\varphi_t^\ast g$, where $\varphi_t\in {\rm Aut}(G)$ is the unique automorphism such that $d\varphi_t|_e=e^{-tD/2}\in{\rm Aut}(\g)$.
\end{rem}

\subsection{GIT on Lie groups}\label{GIT}
Let $N$ be a Lie group with Lie algebra $(\n,\mu_0)$. The Lie  bracket of $\n$ is an element of the \emph{variety of Lie algebras} (see e.g. \cite{lauret3}, \cite{Lau13}, \cite{lauret1})
\[
\mathcal C=\left\{\mu\in\Lambda^2\n^*\otimes \n : \mu \mbox{ satisfies the Jacobi identity}\right\}. 
\] 
By changing $\mu\in\mathcal C$ we obtain all the possible structures of Lie algebra on the vector space $\n$. The Lie group ${\rm GL}(\n)$ acts canonically on $${\rm V}:=\Lambda^2 \n^* \otimes \n$$ by 
\begin{equation}\label{action}
	A\cdot \mu(\cdot,\cdot)=A\,\mu(A^{-1}\cdot,A^{-1}\cdot)\,.
\end{equation}
The action induces the Lie algebra representation $\pi : {\rm End}(\n) \to {\rm End}( \Lambda^2 \n^* \otimes \n)$ given by
\begin{equation}\label{delta}
 \big(\pi(E)\mu \big) (X,Y) :=  E(\mu(X,Y)) - \mu(E(X),Y) - \mu(X,E (Y))\,, \quad X, Y\in \n, \quad E\in {\rm End}(\n)\,,
\end{equation}
and it satisfies $\pi(D)\mu=0$, for any derivation $D\in {\rm Der}(\n)$.

Now we fix a inner product $g$ on $\g$ and for $A,B\in{\rm End}(\n)$ we denote by
\[
	\la  A, B \ra := {\rm tr}\, A B^t\
\]
the scalar product induced on ${\rm End}(\n)$, where the transpose is given with respect to $g$. In order to simplify the notation we still denote with $\ip$ the scalar product induced on $\Lambda^2 \n^* \otimes \n$. The pair $(\mu_0,g)$ induces a tensor $\mm$ via \eqref{M_tens}. Using the metric, we can regard $\mm$ as an endomorphism $$\mm_g:\n\to\n\,.$$ By fixing $g$ and changing $\mu$ we obtain a different $\mm$-endomorphism in ${\rm End}(\n)$ which we denote by $\mm_\mu$. In this way, we have a map from $C$ to ${\rm End}(\n)$, $\mu\mapsto\mm_\mu$. Note that by definition 
$$\mm_{\mu_0}=\mm_{g}\,.$$

\begin{prop}\cite{lauret06} The map
\[
\qquad\mu \mapsto \frac{4}{\Vert \mu \Vert^2} \, \mm_\mu
\] 
from $\Lambda^2 \n^* \otimes \n \setminus \{ 0\}$ to ${\rm End}(\n)$ is a moment map, in the sense of GIT, i.e. 
\begin{equation}\label{eqn_mm}
	\la \mm_\mu, E \ra = \tfrac14 \, \la  \pi(E)\mu, \mu \ra\,,
\end{equation}
for any $E\in {\rm End}(\n)$ and $\mu\in\Lambda^2 \n^* \otimes \n \setminus \{ 0\}$.
\end{prop}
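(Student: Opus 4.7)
The plan is to verify the identity by reducing to the case of symmetric $E$ and then performing a direct computation in coordinates. First I would decompose $E=E_s+E_a$ into its $g$-symmetric and antisymmetric parts and show that both sides of the identity annihilate $E_a$. On the left, the explicit formula \eqref{M_tens} for $\mm$ is manifestly symmetric in its two arguments, so the associated endomorphism $\mm_\mu$ is $g$-self-adjoint; thus $\la \mm_\mu, E_a\ra=\tr(\mm_\mu E_a^t)=-\tr(\mm_\mu E_a)=0$ by cyclicity of the trace. On the right, the action \eqref{action} restricted to the orthogonal group $\operatorname{O}(\n,g)$ preserves the induced scalar product on $V=\Lambda^2\n^*\otimes \n$, so differentiating shows that $\pi(E_a)$ is skew-adjoint with respect to $\la\cdot,\cdot\ra$, whence $\la \pi(E_a)\mu,\mu\ra=0$.

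Now fix $E=E^t$ and choose an orthonormal basis $\{X_i\}$ of $\n$ that diagonalizes $E$, with $EX_i=\lambda_i X_i$. Write $\mu(X_i,X_j)=\sum_k \mu_{ij}^k X_k$, so the structure constants satisfy $\mu_{ij}^k=-\mu_{ji}^k$. From \eqref{delta},
$$\pi(E)\mu(X_i,X_j)=\sum_k (\lambda_k-\lambda_i-\lambda_j)\mu_{ij}^k X_k,$$
and pairing with $\mu$ gives
$$\la \pi(E)\mu,\mu\ra = \sum_{i,j,k}(\lambda_k-\lambda_i-\lambda_j)(\mu_{ij}^k)^2 = \sum_{i,j,k}\lambda_k(\mu_{ij}^k)^2 - 2\sum_{i,j,k}\lambda_i(\mu_{ij}^k)^2,$$
where the second equality uses $(\mu_{ij}^k)^2=(\mu_{ji}^k)^2$ to collapse the $\lambda_i$ and $\lambda_j$ contributions into one.

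For the left-hand side, symmetry of $E$ gives $\la \mm_\mu,E\ra=\tr(\mm_\mu E)=\sum_i \lambda_i\,\mm(X_i,X_i)$. Substituting $X=Y=X_i$ into \eqref{M_tens} and expanding in the basis, the first term of $\mm(X_i,X_i)$ contributes $-\tfrac12\sum_{i,j,k}\lambda_i(\mu_{ij}^k)^2$, while the second contributes $\tfrac14\sum_{i,j,k}\lambda_k(\mu_{ij}^k)^2$ after relabeling summation indices (noting that $g(\mu(X_j,X_k),X_i)=\mu_{jk}^i$). Multiplying the expression for $\la\pi(E)\mu,\mu\ra$ by $\tfrac14$ reproduces exactly these two sums, establishing \eqref{eqn_mm}.

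The computation is essentially routine; the only real obstacle is bookkeeping, namely fixing the normalization of the induced inner product on $V$ so that it is consistent with the trace convention on $\operatorname{End}(\n)$ and with the coefficients $-\tfrac12$ and $\tfrac14$ in \eqref{M_tens}. With the convention $\la\mu,\nu\ra=\sum_{i,j}g(\mu(X_i,X_j),\nu(X_i,X_j))$ (inherited from viewing $V\subset \n^*\otimes\n^*\otimes\n$), the two expansions match term by term, and no input beyond the skew-symmetry of $\mu$ in its arguments is needed.
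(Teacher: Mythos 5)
Your proof is correct. The paper itself offers no argument for this proposition --- it is quoted directly from the cited work of Lauret --- and your direct verification is precisely the standard computation found there: reduce to symmetric $E$ by observing that $\mm_\mu$ is self-adjoint while $\pi$ of a skew-symmetric endomorphism is skew-adjoint for the induced inner product, then diagonalize $E$ and match the two expansions term by term. You are also right to flag the normalization issue as the only delicate point: the identity with the factor $\tfrac14$ holds only for the convention $\la\mu,\nu\ra=\sum_{i,j}g(\mu(X_i,X_j),\nu(X_i,X_j))$ summed over all ordered pairs, and one can sanity-check this against the paper's implicit use of $\tr\mm_\mu=-\tfrac14\lVert\mu\rVert^2$ (take $E=I$, for which $\pi(I)\mu=-\mu$).
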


\medskip
Next, we recall a stratification theorem involving $\rm V$ proved in \cite{Lau10}. Fix a basis in $\n$ and for any element $\mu\in\rm V$ denotes with $\mu_{ij}^k$ its components. Moreover, let
$$\mathcal N := \lbrace \mu\in\mathcal C : \mu \text{ is nilpotent}\rbrace$$
be the {\em variety of nilpotent Lie algebras},
$$\mathfrak t^+:=\{\beta={\rm diag}(a_1,\ldots,a_n)\in \mathfrak t: a_1\leq\ldots\leq a_n\}$$
and  $\alpha_{ij}^k:= E_{kk}-E_{ii}-E_{jj}$, where $E_{ij}$ is the zero matrix with 1 in the $ij$-entry. Here, $\mathfrak t$ denotes the maximal torus algebra in $\mathfrak{gl}_n(\R)$ given by the $n\times n$ diagonal matrices.

\begin{theorem}\label{prop_rest}\cite{Lau10, LW} 
There exists a finite subset $\mathcal{B}\subset\mathfrak t^+$, such that every $\beta\in \mathcal{B}$ satisfies ${\rm tr}\,\beta=-1$ and 
$${\rm V}\backslash\{0\}=\bigcup_{\beta\in \mathcal{B}}S_\beta \quad \mbox{(disjoint union)}\,,$$
where $\{S_\beta\}_{\beta\in \mathcal{B}}$ is a family of ${\rm GL}_n(\R)$-invariant subset of $\rm V$. Given $\mu \in \mathcal{S_\beta}$ 
\begin{equation}
\beta+\lVert\beta\rVert^2 I\quad\mbox{is positive definite } \forall \beta \in \mathcal{B}\,\text{such that}\,\,\mathcal{S_\beta}\cap\mathcal{N}\neq\emptyset\,,
\end{equation}
\begin{equation}\label{cond_brac}\langle [\beta,D],D\rangle\geq 0\,,\quad \forall D\in{\rm Der}(\mu)\quad \text{(equality holds $\Leftrightarrow$ $[\beta,D]=0$)}
\end{equation}
and
\begin{equation}\label{cond_m} \lVert\beta\rVert \leq \frac 4{\lVert\mu\rVert^2} \lVert \mm_\mu\rVert\,.
\end{equation}
The equality in Moreover, if $\mu\in\mathcal{S_\beta}$ satisfies 
\begin{equation}
\label{cond_rest}{\rm min}\{\langle\beta,\alpha_{ij}^k\rangle:\mu_{ij}^k\neq 0\}=\lVert\beta\rVert^2\,,
\end{equation}
then
\begin{equation}
\label{rest_2} \langle\pi(\beta+\lVert\beta\rVert^2I)\mu,\mu\rangle\geq 0
\end{equation}
and 
\begin{equation}
\label{rest_1} {\rm tr}\,\beta D=0, \quad \forall D\in {\rm Der}(\mu)\,.
\end{equation}
The equality in \eqref{rest_2} holds if and only if $\beta+\lVert\beta\rVert^2I\in{\rm Der}(\mu)$.
\end{theorem}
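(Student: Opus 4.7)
The plan is to obtain the statement from the real analogue of the Kirwan--Ness stratification of the unstable vectors for a real reductive group acting linearly on a vector space; all the ingredients have already been assembled in the paper, namely the ${\rm GL}_n(\R)$-action \eqref{action}, its differential $\pi$ given by \eqref{delta}, and the moment map identity \eqref{eqn_mm}. The set $\mathcal{B}$ will be identified with the (finite) collection of optimal destabilizing vectors for ${\rm V}\setminus\{0\}$, taken up to ${\rm O}(n)$-conjugation and placed in $\mathfrak{t}^+$.

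First, for each $\mu\in{\rm V}\setminus\{0\}$ I would consider the function $f_{\mu,E}(t):=\log\lVert e^{tE}\cdot\mu\rVert^2$ for $E$ a $g$-symmetric endomorphism of $\n$. By convexity (Ness), the derivative at $t=0$ equals $\la\pi(E)\mu,\mu\ra/\lVert\mu\rVert^2$ and is proportional to $\la\mm_\mu,E\ra$ via \eqref{eqn_mm}. Minimizing $E\mapsto\la\mm_\mu,E\ra$ over the unit sphere of symmetric endomorphisms (equivalently, finding the closest point to the origin in the moment map image of the relevant orbit closure) produces a canonical direction of steepest instability $\beta_\mu$. Using ${\rm O}(n)$-conjugation one places $\beta_\mu$ in $\mathfrak{t}^+$, and the normalization ${\rm tr}\,\beta=-1$ fixes the scaling of the associated one-parameter subgroup. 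The combinatorial heart of the argument is the identification of $\beta_\mu$ with the closest point to the origin of the convex hull of a nonempty subset of the finitely many weights $\{\alpha_{ij}^k\}$ for the $\mathfrak{t}$-action on $\rm V$; since there are only finitely many such subsets, only finitely many $\beta$ arise, giving $\mathcal{B}$. The strata $S_\beta:=\{\mu:\beta_\mu\text{ is }{\rm O}(n)\text{-conjugate to }\beta\}$ are automatically ${\rm GL}_n(\R)$-invariant because $\beta_{g\cdot\mu}$ is ${\rm O}(n)$-conjugate to $\beta_\mu$.

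The remaining properties are then corollaries of the variational meaning of $\beta$. Positive definiteness of $\beta+\lVert\beta\rVert^2 I$ on strata meeting $\mathcal{N}$ follows from a standard weight-space argument using the fact that nilpotent brackets populate only negative weight spaces for the $\beta$-grading. Inequality \eqref{cond_brac} is obtained by expanding $\la\pi([\beta,D])\mu,\mu\ra$ via \eqref{eqn_mm} and exploiting $\pi(D)\mu=0$, which converts the expression into the squared length of a suitable projection. The length estimate \eqref{cond_m} follows by testing \eqref{eqn_mm} with $E=\beta$, since $\la\mm_\mu,\beta\ra=\tfrac14\la\pi(\beta)\mu,\mu\ra$ and the right-hand side is bounded below by $\tfrac14\lVert\beta\rVert^2\lVert\mu\rVert^2$ on $S_\beta$ by definition of the minimizer. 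Finally, when $\mu$ satisfies \eqref{cond_rest}, the endomorphism $\pi(\beta+\lVert\beta\rVert^2 I)\mu$ has only non-negative weight components with respect to the $\beta$-grading, which yields \eqref{rest_2}, and \eqref{rest_1} again falls out of the moment map pairing using $\pi(D)\mu=0$ and ${\rm tr}\,E=-\la E,I\ra$. The main obstacle I expect is the finiteness of $\mathcal{B}$ together with the well-definedness of $\beta_\mu$ up to ${\rm O}(n)$-conjugation: this rests on the real Kempf--Rousseau--Hesselink filtration for ${\rm GL}_n(\R)$ and on the convex-geometric characterization of the minimum of the moment map along ${\rm GL}_n(\R)$-orbits, both of which are considerably subtler in the real setting than over $\C$.
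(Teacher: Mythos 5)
A preliminary but important point: the paper does not prove Theorem \ref{prop_rest} at all --- it is quoted, with attribution, from \cite{Lau10} and \cite{LW}, so there is no in-paper argument to compare your proposal against. Your sketch does correctly identify the framework used in those references (a real Kirwan--Ness type stratification of ${\rm V}\setminus\{0\}$ for the ${\rm GL}_n(\R)$-action, with $\beta$ realized as ${\rm mcc}$ of the weights $\alpha_{ij}^k$ supporting $\mu$, exactly as in the remark following the theorem), and two of your verifications are essentially complete: \eqref{cond_m} does follow from \eqref{eqn_mm} together with $\langle\pi(\beta)\mu,\mu\rangle\geq\lVert\beta\rVert^2\lVert\mu\rVert^2$ and Cauchy--Schwarz (after noting both sides of \eqref{cond_m} are ${\rm O}(n)$-invariant, so one may assume \eqref{cond_rest}), and \eqref{rest_2} with its equality case follows from $\pi(I)\mu=-\mu$ and \eqref{cond_rest} exactly as you say.

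However, several steps you label as standard are precisely the substantive content of \cite{Lau10,LW}, and your one-line justifications do not go through. (a) For \eqref{cond_brac}, expanding $\langle\pi([\beta,D])\mu,\mu\rangle$ with $\pi(D)\mu=0$ only produces statements about $\langle \mm_\mu,[\beta,D]\rangle$; what is actually needed is the structural fact that ${\rm Der}(\mu)$ lies in the parabolic subalgebra of non-negative ${\rm ad}_\beta$-weights, whence $\langle[\beta,D],D\rangle=\sum_{r\geq0}r\lVert D_r\rVert^2\geq0$ with the stated equality case; that containment is a theorem about the stratification, not a moment-map computation. (b) Likewise \eqref{rest_1} does not ``fall out of the moment map pairing'': $\pi(D)\mu=0$ gives $\langle\mm_\mu,D\rangle=0$, not $\langle\beta,D\rangle=0$; the actual proof uses $\beta={\rm mcc}\{\alpha_{ij}^k:\mu_{ij}^k\neq0\}$ together with a reduction of a general derivation to its diagonal, $\beta$-commuting part. (c) The positive definiteness of $\beta+\lVert\beta\rVert^2I$ on strata meeting $\mathcal N$ is the key technical theorem behind \cite{Lau10}; the claim that ``nilpotent brackets populate only negative weight spaces for the $\beta$-grading'' has no evident meaning or proof and cannot substitute for the argument in \cite{LW}. (d) The finiteness of $\mathcal B$, the ${\rm GL}_n(\R)$-invariance of the $S_\beta$, and the well-definedness of $\beta_\mu$ up to ${\rm O}(n)$-conjugacy rest on the real Kempf--Rousseau--Hesselink theory, which you flag but do not supply. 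In short, the architecture is right, but at the four points above your text restates the theorem rather than proving it; since the paper itself treats the result as a black box, the appropriate resolution is to cite \cite{Lau10}, \cite{LW} (and \cite{realGIT}) for exactly these points.
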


\medskip
\begin{rem}\rm Note that condition \eqref{cond_rest} is always satisfied by some element in the ${\rm O}(n)$-orbit of $\mu$. If condition \eqref{cond_rest} is satisfied and $\mu\in S_\beta$, then
$$\beta=\text{\rm mcc}\{\alpha_{ij}^k:\mu_{ij}^k\neq0\}\,.$$
Here with $\text{\rm mcc}(X)$ we mean the unique element of minimal norm in the convex hull CH(X) of a subset $X\subset \mathfrak{t}$ (\cite{Lau10}).
\end{rem}

\section{Structure of solitons on Lie groups}
Let $(G,g)$ be a Lie group equipped with a left-invariant metric. Let $(\g,\lb)$ be the Lie algebra of $G$ and $\ip$ the inner product induced by $g$ on $\g$. Let 
$$\g=\r\oplus \n$$
be the orthogonal decomposition of $\g$, where $\n$ is the nilradical of $\g$, and
$$ \lambda:=[\cdot,\cdot]|_{\r\times\r}\,,\quad \sigma:=[\cdot,\cdot]|_{\r\times\n}\,,\quad \mu:=[\cdot,\cdot]|_{\n\times\n}\,.$$
Note that $\lambda$ can be further decomposed in $\lambda_0: \r\times\r\rightarrow \r$ and $\lambda_1: \r\times\r\rightarrow \n$.

\noindent Let $\beta$ such that $\mu \in \mathcal{S}_\beta$ and define $E_{\beta}\in {\rm End}(\g)$ by
$$E_\beta|_\r=0\,,\quad E_\beta|_\n= \beta+\lVert\beta\rVert^2I\,,$$ 
where $I$ is the identity of $\n$. Moreover, we denote by $\mm_\n:\n\to\n$ the endomorphism of $\n$ defined by using \eqref{M_tens} and, when $\r$ is a subalgebra of $\g$, we denote by  $\mm_\r:\r\to\r$ the endomorphism of $\r$. We have the following lemma.

\begin{lemma}\label{lem_E}\cite{LafLauJDG} Assume that $(\n,\mu)$ satisfies \eqref{cond_rest}. Then, $$\langle \pi(E_\beta)[\cdot,\cdot],[\cdot,\cdot]\rangle\geq 0$$ and 
$$\begin{aligned}
\langle \pi(E_\beta)[\cdot,\cdot],[\cdot,\cdot]\rangle =&\langle \pi(\beta+\lVert\beta\rVert^2I)\mu,\mu\rangle\\
& +\sum \langle (\beta+\lVert\beta\rVert^2I)[r_i,r_j],[r_i,r_j]\rangle\\
& +\sum 2\langle [\beta,\ad_{r_i}|_\n],\ad_{r_i}|_\n\rangle\,,
\end{aligned}$$
with $\{r_i\}$ orthonormal basis of $\r$. Moreover, each term is non-negative.
\end{lemma}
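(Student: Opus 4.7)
The plan is to decompose the bracket $[\cdot,\cdot]$ into its four pieces $\lambda_0,\lambda_1,\sigma,\mu$ via the orthogonal splitting $\g=\r\oplus\n$, and to compute $\pi(E_\beta)[\cdot,\cdot]$ on each type of pair of arguments using formula \eqref{delta} together with $E_\beta|_\r=0$ and $E_\beta|_\n=\beta+\lVert\beta\rVert^2I$. For $X,Y\in\n$, since $\n$ is an ideal one has $[X,Y]=\mu(X,Y)\in\n$, so the computation reduces to $\pi(E_\beta)[X,Y]=\pi(\beta+\lVert\beta\rVert^2I)\mu(X,Y)$. For $X,Y\in\r$, the two terms in \eqref{delta} involving $E_\beta X$ and $E_\beta Y$ vanish, and of the remaining term only the $\n$-component $\lambda_1(X,Y)$ of $[X,Y]$ is seen by $E_\beta$, producing $(\beta+\lVert\beta\rVert^2I)\lambda_1(X,Y)$. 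For $X\in\r$ and $Y\in\n$ one has $[X,Y]=\sigma(X,Y)\in\n$, and a short manipulation yields $\pi(E_\beta)[X,Y]=[\beta,\ad_X|_\n]Y$, the shift $\lVert\beta\rVert^2 I$ dropping out since it commutes with $\ad_X|_\n$.

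Next I would assemble $\langle\pi(E_\beta)[\cdot,\cdot],[\cdot,\cdot]\rangle$ by summing over orthonormal bases $\{r_i\}$ of $\r$ and $\{n_k\}$ of $\n$. The $(\n,\n)$ piece directly gives $\langle\pi(\beta+\lVert\beta\rVert^2I)\mu,\mu\rangle$. The $(\r,\r)$ piece gives $\sum\langle(\beta+\lVert\beta\rVert^2I)[r_i,r_j],[r_i,r_j]\rangle$, the orthogonality $\r\perp\n$ killing the cross term between $\lambda_0$ and $(\beta+\lVert\beta\rVert^2I)\lambda_1$. Combining the symmetric contributions $(\r,\n)$ and $(\n,\r)$ yields
$$2\sum_i\tr\bigl([\beta,\ad_{r_i}|_\n]\,(\ad_{r_i}|_\n)^t\bigr)=2\sum_i\langle[\beta,\ad_{r_i}|_\n],\ad_{r_i}|_\n\rangle.$$

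Finally, for the non-negativity of each term: the first is non-negative by \eqref{rest_2}, which applies since $(\n,\mu)$ satisfies \eqref{cond_rest} by assumption; the second is non-negative since $\beta+\lVert\beta\rVert^2I$ is positive definite on $\n$, using that $\mu\in\mathcal{N}\cap\mathcal{S}_\beta\neq\emptyset$ so the relevant clause of Theorem \ref{prop_rest} applies; and the third is non-negative by \eqref{cond_brac} applied to $D:=\ad_{r_i}|_\n$, which is a derivation of $(\n,\mu)$ since $\n$ is an ideal of $\g$ and the Jacobi identity $[r_i,[X,Y]]=[[r_i,X],Y]+[X,[r_i,Y]]$ for $X,Y\in\n$ is precisely the derivation property. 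I expect the main obstacle to be purely bookkeeping, namely tracking the decomposition across the bilinear inner product on $\Lambda^2\g^*\otimes\g$ so that the three cross-type contributions match the stated expressions with the correct factors, rather than any genuine analytical difficulty.
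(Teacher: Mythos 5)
Your proof is correct: the paper gives no argument for this lemma, deferring entirely to \cite{LafLauJDG}, and your block-by-block evaluation of $\pi(E_\beta)$ on the four pieces $\lambda_0,\lambda_1,\sigma,\mu$ of the bracket --- with the three non-negativity inputs \eqref{rest_2} (valid because \eqref{cond_rest} is assumed), positive definiteness of $\beta+\lVert\beta\rVert^2I$ on the stratum meeting $\mathcal N$, and \eqref{cond_brac} applied to $\ad_{r_i}|_\n\in{\rm Der}(\n,\mu)$ --- is exactly the computation carried out in that reference. The factor $2$ on the mixed term and the disappearance of the $\lVert\beta\rVert^2 I$ shift there are handled correctly under the ordered-pair summation convention for the inner product on $\Lambda^2\g^*\otimes\g$.
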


Henceforth, when confusion cannot occur, we identify tensor ${\sf K}$ with its associated endomorphism ${\sf K}_{g}$. Also ${\sf K}$-tensor components will be identify with their associated endomorphisms. The following lemma (whose proof is a direct computation) will be useful in the sequel.

\begin{lemma}\label{lem_M} Assume $[\r,\r]\subset \r$. Then, for any $A,B\in\r$ and $Z,W\in\n$,
$$\begin{aligned}
\langle {\rm M} Z, W\rangle =& \langle {\rm M}_\n Z,W\rangle+\frac 12\sum \langle [{\rm ad}_{r_i}|_\n,{\rm ad}_{r_i}^t|_\n]Z,W\rangle\,,\\
\langle {\rm M} A, B\rangle =& \langle {\rm M}_\r A,B\rangle-\frac 12 {\rm tr}({\rm ad}_A|_\n {\rm ad}_B^t |_\n)\,,\\
\langle {\rm M} A, W\rangle =&-\frac 12 {\rm tr}({\rm ad}_A|_\n {\rm ad}_W^t|_\n)\,,
\end{aligned}$$
where $\{r_i\}$ is an orthonormal basis of $\r$.
\end{lemma}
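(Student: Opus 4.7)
The plan is to prove all three identities by expanding the definition \eqref{M_tens} of ${\rm M}$ in an orthonormal basis of $\g$ adapted to the splitting $\g = \r \oplus \n$. Concretely, I would write such a basis as $\{r_i\} \cup \{n_\alpha\}$, with $\{r_i\}$ an ON basis of $\r$ and $\{n_\alpha\}$ an ON basis of $\n$. Two structural facts drive the bookkeeping: $\n$ is an ideal of $\g$, so $[\g,\n] \subset \n$; and by hypothesis $[\r,\r] \subset \r$. It follows that $\ad_{r_i}$ preserves the splitting, hence so does $\ad_{r_i}^t$; and that among the brackets $[X_k,X_j]$ over the adapted basis, those of type $(\r,\r)$ lie in $\r$ while those of types $(\r,\n)$ and $(\n,\n)$ lie in $\n$. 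This immediately forces many terms in the two sums defining ${\rm M}$ either to vanish by orthogonality or to regroup onto the analogous terms defining ${\rm M}_\n$ or ${\rm M}_\r$.

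For the first identity, with $Z, W \in \n$, the $(\n,\n)$ pieces of both sums in \eqref{M_tens} reassemble $\langle {\rm M}_\n Z, W\rangle$, and the $(\r,\r)$ terms of the quadratic sum vanish since their brackets land in $\r \perp \n$. What remains is $-\frac{1}{2}\sum_i g([Z,r_i],[W,r_i])$ from the first sum, together with $\frac{1}{2}\sum_{i,\alpha} g(\ad_{r_i} n_\alpha, Z)\, g(\ad_{r_i} n_\alpha, W)$ from the quadratic sum — the factor $\frac{1}{2}$ arising because the mixed type appears in both orderings $(k,j) = (i,\alpha)$ and $(k,j) = (\alpha,i)$, and the two sign changes cancel in the product. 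Parseval on $\n$, applied to $\ad_{r_i}^t Z, \ad_{r_i}^t W \in \n$, converts the quadratic-sum piece into $\frac{1}{2}\sum_i \langle \ad_{r_i}\ad_{r_i}^t Z, W\rangle$, while the first-sum piece equals $-\frac{1}{2}\sum_i \langle \ad_{r_i}^t\ad_{r_i} Z, W\rangle$. Their sum is exactly $\frac{1}{2}\sum_i \langle [\ad_{r_i}|_\n, \ad_{r_i}^t|_\n]Z, W\rangle$.

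For the second identity, with $A, B \in \r$, the $(\r,\cdot)$ piece of the first sum combines with the $(\r,\r)$ piece of the quadratic sum to form $\langle {\rm M}_\r A, B\rangle$, while the remaining $(\r,\n)$ and $(\n,\n)$ brackets in the quadratic sum pair trivially with $A, B \in \r$. The leftover $-\frac{1}{2}\sum_\alpha g([A, n_\alpha], [B, n_\alpha])$ equals $-\frac{1}{2}\tr(\ad_A|_\n \ad_B^t|_\n)$ by the definition of the trace of an endomorphism of $\n$. For the third identity, with $A \in \r$ and $W \in \n$, the $(\r,\r)$ piece of the first sum vanishes since $[A, r_i] \in \r$ is orthogonal to $[W, r_i] \in \n$, and every term of the quadratic sum vanishes because each bracket is orthogonal either to $A$ or to $W$; only the surviving term $-\frac{1}{2}\sum_\alpha g([A,n_\alpha], [W, n_\alpha])$ remains, which reads $-\frac{1}{2}\tr(\ad_A|_\n \ad_W^t|_\n)$.

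I anticipate no serious obstacle, as once the basis is adapted to $\g = \r \oplus \n$ the whole argument is bookkeeping dictated by orthogonality and by the ideal property of $\n$. The only mildly delicate point is tracking the sign and the factor $\frac{1}{2}$ in the first identity so that the cross-terms assemble into the commutator $[\ad_{r_i}|_\n, \ad_{r_i}^t|_\n]$; this relies on the symmetric summation $(k,j) \leftrightarrow (j,k)$ in the second sum of \eqref{M_tens} and on the invariance of $\n$ under both $\ad_{r_i}$ and $\ad_{r_i}^t$.
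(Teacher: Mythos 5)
Your proposal is correct and is precisely the ``direct computation'' that the paper invokes without writing out: expand \eqref{M_tens} in an orthonormal basis adapted to $\g=\r\oplus\n$, use that $\n$ is an ideal and $[\r,\r]\subset\r$ to sort the terms, and reassemble the cross-terms into the commutator via Parseval. The sign and the factor $\tfrac12$ in the first identity are handled correctly (the two orderings of the mixed pairs double the $\tfrac14$, and the squared sign disappears), so there is nothing to add.
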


\begin{rem}\rm Note that under the assumptions of Lemma \ref{lem_M}, in matrix notation we have
\begin{equation}\label{matrix_M}
{\rm M}_g=\frac 12 \begin{bmatrix} 2{\rm M}_\r- \tilde B & - \tilde B\\
-\tilde B & 2{\rm M}_\n+ \sum [{\rm ad}_{r_i}|_\n,{\rm ad}_{r_i}^t|_\n]
\end{bmatrix}\,,
\end{equation}
where $\tilde B$ is the operator given by $\langle \tilde B X,Y\rangle = {\rm tr}({\rm ad}_X|_\n {\rm ad}_Y^t|_\n)$, for all $X,Y\in \g$, and the blocks are in terms of $\g = \r \oplus \n$.
\end{rem}

From now on we assume that the metric $g$ satisfies the semi-algebraic expanding soliton equation
$${\sf K}_g = c\,I+\frac 12(D+D^t), \qquad c <0, \quad D \in {\rm Der}(\g)\,,$$
and we set 
$$F:= S(\ad_H+D)\,,$$
where $S(A)$ is the symmetrization of $A\in{\rm End}(\g)$.

\begin{lemma} We have
\begin{equation}\label{lemma1} c\,{\rm tr}\,F+{\rm tr}\,F^2=0\,.
\end{equation}
\end{lemma}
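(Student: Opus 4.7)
The plan is to turn the desired identity into the vanishing of two separate traces, each for a structural reason. Substituting ${\sf K}_g = \mm_g - S(\ad_H) + \hat Q_g$ into the semi-algebraic soliton equation and noting that $F = S(\ad_H) + \tfrac{1}{2}(D + D^t)$, the soliton condition is equivalent to the compact identity $F = \mm_g + \hat Q_g - cI$. Multiplying by $F$ and taking traces gives $c\,\tr F + \tr F^2 = \tr(F(F+cI)) = \tr(\mm_g F) + \tr(\hat Q_g F)$, so it suffices to show both summands vanish.

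For $\tr(\mm_g F)$ I would invoke the moment-map identity \eqref{eqn_mm}, which is purely linear-algebraic in $\mu$ and $E$ and therefore applies to the bracket $\mu$ of $\g$ with no nilpotency hypothesis. The element $\tilde D := \ad_H + D$ is a derivation of $\g$ (being the sum of two derivations), so $\pi(\tilde D)\mu = 0$ and the moment-map identity yields $\tr(\mm_g \tilde D^t) = \langle \mm_g, \tilde D\rangle = 0$. Since $\mm_g$ is self-adjoint, also $\tr(\mm_g \tilde D) = 0$, and averaging the two gives $\tr(\mm_g F) = \tfrac{1}{2}\tr\bigl(\mm_g(\tilde D + \tilde D^t)\bigr) = 0$.

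The second term is handled by the rank-one structure of $\hat Q_g$. From $\hat Q(X,Y) = \tfrac{1}{2}\langle H, X\rangle\langle H, Y\rangle$ one reads $\hat Q_g(X) = \tfrac{1}{2}\langle H, X\rangle H$, so $\tr(\hat Q_g F) = \tfrac{1}{2}\langle FH, H\rangle$. The crucial ingredient is that $D^t H = 0$: for any derivation $D$ one has $\ad_{DX} = [D, \ad_X]$, and the trace of a commutator vanishes, so $\langle H, DX\rangle = \tr\ad_{DX} = 0$ for every $X$. The same argument applied to the derivation $\ad_H$ gives $\ad_H^t H = 0$, while $\ad_H H = 0$ trivially. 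Combining these with $F = \tfrac{1}{2}(\tilde D + \tilde D^t)$ yields $\langle FH, H\rangle = \langle DH, H\rangle = \langle H, D^t H\rangle = 0$, hence $\tr(\hat Q_g F) = 0$ and the lemma follows.

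The only delicate point in this plan is the use of the moment-map identity in the non-nilpotent setting; this is legitimate because \eqref{eqn_mm} carries no Jacobi assumption. Everything else is a direct assembly of Lauret-type trace identities together with the elementary fact that the mean-curvature vector $H$ lies in the kernel of the transpose of every derivation of $\g$.
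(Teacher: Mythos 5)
Your proof is correct and follows essentially the same route as the paper: rewrite the soliton equation as $F+cI=\mm_g+\hat Q_g$ and kill $\tr(\mm_g E)$ for the derivation $E=\ad_H+D$ via the moment-map identity \eqref{eqn_mm}, which indeed requires no Jacobi or nilpotency hypothesis. The only (harmless) divergence is in the $\hat Q$ term, where the paper differentiates the automorphism-invariance $e^{-t\tilde D^t}\hat Q e^{-t\tilde D}=\hat Q$ to get $\tr\hat Q D=0$, while you obtain the same vanishing from the rank-one form $\hat Q_g=\tfrac12\la H,\cdot\ra H$ and the identity $\ad_{DX}=[D,\ad_X]$ forcing $D^tH=0$; the two observations are equivalent.
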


\begin{proof} Let $E:=\ad_H+D$, then $E\in{\rm Der}(\g)$ and
$$\tr (c\, I- \hat Q + F)E = \tr \mm_gE = \frac 14\la \pi(E)[\,,\,],[\,,\,]\ra=0\,,$$
from \eqref{eqn_mm}. Since $\hat Q$ is invariant under automorphisms of $\g$, it follows $$e^{-t\tilde D^t}\hat Q e^{-t\tilde D}=\hat Q\,,$$ for any derivation $\tilde D\in{\rm Der}(\g)$. Differentiating at $t=0$, we have $D^t \hat Q  + \hat Q D=0$, which implies 
$$0 = \tr (D^t\hat Q + \hat Q D)=2\tr  \hat QD\,,$$
and the claim follows.
\end{proof}

Now we have

\begin{prop}\label{prop_strut} The orthogonal complement $\mathfrak r$ of the nilradical $\n$ is a reductive Lie subalgebra of $\g$ and
$$\g = \r \ltimes \n\,.$$
\end{prop}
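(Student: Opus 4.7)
The plan is to apply the stratification of Theorem \ref{prop_rest} to the bracket $\mu$ of the nilradical. Let $\beta\in\mathcal B$ be the stratum label with $\mu\in S_\beta$; after replacing $\mu$ by a representative in its $\mathrm{O}(\dim\n)$-orbit, we may assume that condition \eqref{cond_rest} holds, so that Lemma \ref{lem_E} applies.

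I will compute $\langle \mm,E_\beta\rangle$ in two ways. On the one hand, the moment-map identity \eqref{eqn_mm} applied on the full Lie algebra $\g$ with $E=E_\beta$ combined with Lemma \ref{lem_E} gives
\begin{equation*}
4\langle\mm,E_\beta\rangle=\langle\pi(\beta+\|\beta\|^2 I)\mu,\mu\rangle+\sum\langle(\beta+\|\beta\|^2 I)[r_i,r_j],[r_i,r_j]\rangle+2\sum\langle[\beta,\ad_{r_i}|_\n],\ad_{r_i}|_\n\rangle,
\end{equation*}
a sum of three non-negative terms. On the other hand, the soliton equation $\mm=cI+F-\hat Q$, the observation that $\hat Q$ annihilates $\n$ (for $Z\in\n$ the operator $\ad_Z$ is nilpotent, so $\mathrm{tr}\,\ad_Z=0$; this also shows $H\in\r$), and the identity \eqref{rest_1} applied to $(\ad_H+D)|_\n\in\mathrm{Der}(\mu)$ yield
\begin{equation*}
\langle\mm,E_\beta\rangle=c\,\mathrm{tr}(E_\beta)+\|\beta\|^2\,\mathrm{tr}_\n\bigl((\ad_H+D)|_\n\bigr).
\end{equation*}
Using $\mathrm{tr}(E_\beta)=-1+\|\beta\|^2\dim\n\geq 0$ (Cauchy--Schwarz, since $\mathrm{tr}\,\beta=-1$), the trace identity \eqref{lemma1}, and the vanishing $\langle\mm,\ad_H+D\rangle=0$ that follows from $\ad_H+D$ being a derivation of $\g$, one obtains $\langle\mm,E_\beta\rangle\leq 0$, and hence equality throughout.

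All three non-negative summands in the first display must therefore vanish. The vanishing of the middle term, together with the positive-definiteness of $\beta+\|\beta\|^2 I$ on $\n$ guaranteed by Theorem \ref{prop_rest}, forces $\lambda_1(r_i,r_j)=0$ for all $i,j$. Hence $[\r,\r]\subset\r$, so $\r$ is a Lie subalgebra of $\g$ and $\g=\r\ltimes\n$. Reductivity of $\r$ then follows from the classical fact $[\g,\mathrm{rad}(\g)]\subset\n$: the radical of $\g/\n\cong\r$ is central in the quotient, so $\r$ splits as a direct sum of a semisimple ideal and an abelian ideal.

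The chief technical obstacle is the estimate $\langle\mm,E_\beta\rangle\leq 0$. Controlling $\mathrm{tr}_\n((\ad_H+D)|_\n)$ requires playing the two derivation identities---Lemma \ref{lemma1} and $\langle\mm,\ad_H+D\rangle=0$---against the non-negative quantity $\mathrm{tr}(E_\beta)$, invoking the expansion hypothesis $c<0$ at precisely the right spot; this is where the bookkeeping must be done carefully.
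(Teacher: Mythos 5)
Your strategy for the non-abelian case is essentially the paper's: evaluate $\la \mm, E_\beta\ra$ once via the moment map identity \eqref{eqn_mm} and Lemma~\ref{lem_E} (a sum of three non-negative terms) and once via the soliton equation, then force equality and kill $\lambda_1$. There is, however, a genuine gap: your argument silently assumes $\mu\neq 0$. When the nilradical is abelian there is no stratum label $\beta$ with $\mu\in S_\beta$ (the stratification of Theorem~\ref{prop_rest} covers ${\rm V}\setminus\{0\}$ only), so $E_\beta$, Lemma~\ref{lem_E} and \eqref{rest_1} are all unavailable and your two displayed computations make no sense. The paper runs a separate (parallel) argument in that case with the test endomorphism $E|_\r=0$, $E|_\n=I$, for which \eqref{eqn_mm} gives $c\,n+\tr F=\tfrac14|\lambda_1|^2\geq 0$ with $n=\dim\n$, and then \eqref{lemma1} together with $\tr F=\tr F|_\n$ and the estimate $\tr F^2\geq n^{-1}(\tr F|_\n)^2$ forces $\lambda_1=0$. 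You need an analogous paragraph.

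The second issue is the inequality $\la\mm,E_\beta\ra\leq 0$, which you flag as the ``chief technical obstacle'' but do not actually prove. It does hold, though not from the three ingredients you list alone: one also needs the Cauchy--Schwarz inequality $(\tr FE_\beta)^2\leq \tr F^2\,\tr E_\beta^2$ for the operators $F$ and $E_\beta$, the identity $\tr E_\beta^2=\lVert\beta\rVert^2\tr E_\beta$, and the fact that $\tr F=\tr F|_\n$ for the symmetrization of a derivation (\cite{LafLauJDG}, Lemma~2.6), which is what lets you feed \eqref{lemma1} into your formula for $\la\mm,E_\beta\ra$. Concretely, \eqref{lemma1} and $c<0$ give $\tr F\geq 0$ and $\tr F^2=-c\,\tr F$; by \eqref{rest_1} one has $\tr FE_\beta=\lVert\beta\rVert^2\tr F$, and hence Cauchy--Schwarz yields $\lVert\beta\rVert^4(\tr F)^2\leq -c\,\tr F\cdot\lVert\beta\rVert^2\tr E_\beta$, i.e. $\tr FE_\beta\leq -c\,\tr E_\beta$, which is the desired bound (the case $\tr F=0$ gives $F=0$ and the bound is immediate from $\tr E_\beta\geq 0$). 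The paper organizes the very same computation as a single squeeze, using the non-negativity of $\la\mm,E_\beta\ra$ to reverse Cauchy--Schwarz and to conclude in addition that $F=tE_\beta$; your two-sided formulation is equivalent once these steps are supplied. Your concluding deductions --- vanishing of the middle term, $\lambda_1=0$ from positive definiteness of $\beta+\lVert\beta\rVert^2I$, and reductivity of $\r\cong\g/{\n}$ from $[\g,{\rm rad}(\g)]\subset\n$ --- are correct and match the paper.
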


\begin{proof} Without loss of generality we can suppose that condition \eqref{cond_rest} holds, since the claim condition is preserved by the ${\rm O}(n)$-action on $(\n,\mu)$ (see \cite{LafLauJDG} for more details).
\medskip

To prove the statement, we study separately the case when either $\n$ is abelian or not. In the former case, i.e. $\mu=0$, let $E\in{\rm End}(\g)$ be given by $E|_\r=0$ and $E|_\n= I$. Since ${\rm tr}\,F={\rm tr}\,F|_\n$ (\cite{LafLauJDG}, Lemma 2.6), by \eqref{eqn_mm} we have 
\begin{equation}\label{proof1}
c\,n+{\rm tr}\,F=\tr (c\, I- \hat Q + F)E = \tr \mm_gE=\frac 14 |\lambda_1|^2\,,
\end{equation} 
where $n:={\rm dim}(\n)$. Clearly, if $n=0$ the claim follows. Otherwise, from \eqref{lemma1} and \eqref{proof1} we have
$$ c\,n+{\rm tr}\, F \geq 0\quad \text{and}\quad {\rm tr}\, F^2 \leq n^{-1}({\rm tr}\,F)^2\,,$$
which force $\lambda_1=0$, $F|_\r=0$ and $F|_\n = t\,I$, for some $t\geq 0$.
\medskip

Now assume $\n$ non-abelian and recall that \eqref{cond_rest} holds. Then, in view of Lemma \ref{lem_E} we have
\begin{equation}\label{proof2}
\begin{aligned}
\langle \pi(E_\beta)[\cdot,\cdot],[\cdot,\cdot]\rangle=&\langle \pi(E_\beta)\lambda_0,\lambda_0\rangle+\langle \pi(E_\beta)\lambda_1,\lambda_1\rangle\\
&+2\langle \pi(E_\beta)\sigma,\sigma\rangle+\langle \pi(E_\beta)\mu,\mu\rangle\geq0\,,
\end{aligned}\end{equation}
which implies
\begin{equation}\label{proof3}
c\,{\rm tr}\, E_\beta+{\rm tr}\,FE_\beta=\tr (c\, I- \hat Q + F)E_\beta = \tr \mm_gE_\beta\geq 0\,,
\end{equation}
since \eqref{eqn_mm} holds and ${\rm tr}\, \hat Q E_\beta=0$. Hence, the following equalities hold (since ${\rm tr}\,\beta=-1$):
$${\rm tr} \,E_\beta^2=\rVert\beta\lVert^2\,{\rm tr}\, E_\beta\quad\text{and}\quad {\rm tr}\,FE_\beta=\rVert\beta\lVert^2\,{\rm tr}\,F\,,$$
and using the above formulae we have
$${\rm tr}\,F^2{\rm tr}\,E_\beta^2\leq ({\rm tr}\,FE_\beta)^2(\leq {\rm tr}\,F^2{\rm tr}\,E_\beta^2)\,,$$
which implies
$$F=tE_\beta\,,\quad \text{for some }t\geq 0 \,.$$
Moreover, since \eqref{lemma1} and \eqref{proof2} hold, we have
$$c\,{\rm tr}\, E_\beta+{\rm tr}\,FE_\beta=0\,$$
and $\lambda_1=0$. Hence, the claim follows.
\end{proof}

From the proof of Proposition \ref{prop_strut} we can easily deduce the following result.

\begin{prop}\label{cor} Assume $\mu\neq 0$ and satisfying \eqref{cond_rest}. Then
\begin{itemize}
\item[i.] $[\beta,{\rm ad}_\r|_\n]=0$,\\
\item[ii.] $\beta+\lVert\beta\rVert^2 I\in{\rm Der}(\n)$,\\
\item[iii.] $F= t\,E_\beta$, where $t=\frac{\tr F|_\n}{-1+\lVert\beta^2\rVert\,{\rm dim}\, \n}$.
\end{itemize}
While, for $\mu=0$ it follows $F|_\r=0$ and $F|_\n=t\, I$, where $t=\frac{\tr F|_\n}{{\rm dim}\, \n}$. 
\end{prop}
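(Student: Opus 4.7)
The plan is to observe that Proposition \ref{cor} is precisely the equality-case read-off of the inequalities already in the proof of Proposition \ref{prop_strut}; I will trace that chain backwards and extract the consequences of saturation in each step.

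In the non-abelian case $\mu\neq 0$, the proof of Proposition \ref{prop_strut} already produced two key facts: the Cauchy--Schwarz analysis (using $\tr E_\beta^2=\lVert\beta\rVert^2\tr E_\beta$ and $\tr FE_\beta=\lVert\beta\rVert^2\tr F$) forced $F=tE_\beta$ for some $t\geq 0$, and \eqref{lemma1} gave $c\,\tr E_\beta+\tr FE_\beta=0$. Combining these with $\tr\hat Q E_\beta=0$ yields $\tr\mm_gE_\beta=0$, and by the moment-map identity \eqref{eqn_mm} this is equivalent to $\la\pi(E_\beta)[\cdot,\cdot],[\cdot,\cdot]\ra=0$. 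The vanishing $\tr\hat QE_\beta=0$ does \emph{not} follow from the derivation trick used to prove \eqref{lemma1}, since $E_\beta$ is not a derivation of $\g$; instead I would argue that $\hat Q_g$ annihilates $\n$ (because $\ad_X$ is nilpotent for $X\in\n$), while $E_\beta|_\r=0$, so both blocks of the trace contribute zero.

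Next, apply Lemma \ref{lem_E}, which writes the quantity $\la\pi(E_\beta)[\cdot,\cdot],[\cdot,\cdot]\ra$ as a sum of three separately non-negative pieces. Each must therefore vanish. The vanishing of $\sum\la[\beta,\ad_{r_i}|_\n],\ad_{r_i}|_\n\ra$ together with the strict clause of \eqref{cond_brac} gives item (i), while the vanishing of $\la\pi(\beta+\lVert\beta\rVert^2I)\mu,\mu\ra$ together with the equality case of \eqref{rest_2} gives item (ii). For item (iii), take traces in the identity $F=tE_\beta$: since $E_\beta|_\r=0$ we have $\tr F=\tr F|_\n$, and $\tr E_\beta=\tr(\beta+\lVert\beta\rVert^2I)=-1+\lVert\beta\rVert^2\dim\n$, which gives the stated formula for $t$.

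The abelian case $\mu=0$ is already latent in the argument around \eqref{proof1}: the inequalities $c\,n+\tr F\geq 0$ and $\tr F^2\leq n^{-1}(\tr F)^2$, forced to be equalities by \eqref{lemma1}, are exactly the Cauchy--Schwarz equality case yielding $F|_\n=t\,I$, while $F|_\r=0$ drops out of the same computation and tracing fixes $t$. Overall I expect no serious obstacle here; the whole argument is essentially bookkeeping on an analysis already carried out. The one subtle point I would flag is the verification of $\tr\hat QE_\beta=0$, because it must be handled structurally rather than via the automorphism invariance of $\hat Q$.
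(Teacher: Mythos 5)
Your proof is correct and follows essentially the same route as the paper's, which is simply to read off the equality cases in the proof of Proposition \ref{prop_strut} (namely $\langle\pi(E_\beta)[\cdot,\cdot],[\cdot,\cdot]\rangle=0$ together with Lemma \ref{lem_E}, the strict clause of \eqref{cond_brac}, and the equality case of \eqref{rest_2}, plus tracing $F=tE_\beta$). Your structural verification of $\tr\hat Q E_\beta=0$ (via $\tr\ad_Z=0$ for $Z\in\n$ and $E_\beta|_\r=0$) correctly fills in a step the paper asserts without comment.
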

\begin{proof} Items (i) and (ii) respectively follow from \eqref{cond_brac} and \eqref{rest_2}, since $\langle \pi(E_\beta)[\cdot,\cdot],[\cdot,\cdot]\rangle=0$. The other claims follow directly by the previous proof.
\end{proof}

\begin{rem}\label{rem_dec}\rm
Let $\a$ be the center of $\r$. In view of Proposition \ref{prop_strut}, $\r$ is a reductive Lie algebra and consequently it decomposes as
$$\r=\h\oplus \a\,,$$
where $\h:=\lambda(\r,\r)$ is a semisimple Lie algebra. Hence, we can write $\g$ as
$$\g=(\h\oplus\a)\ltimes_\theta \n\,,$$
where $\theta(X):={\rm ad}_X|_\n$, for all $X\in\r$. However, since $\a$ is an abelian subalgebra of $\g$, we can also write
$$\g= \h\ltimes_\theta(\a\ltimes_\theta\n)\,,$$
where $\theta(X):={\rm ad}_X|_{\a\oplus\n}$ and $\theta(X)A=0$, for any $X\in\h$ and $A\in\a$.
\end{rem}

With the notations of Proposition \ref{prop_strut} in mind, we have the following

\begin{lemma}\label{lem_ad} We have
\begin{equation*}\label{eq1} {\rm ad}_X^t|_\n\in {\rm Der}(\n)\,,\end{equation*}
 for any $X\in \r$, and
\begin{equation*}\label{eq2}\sum[{\rm ad}_{r_i}|_\n,{\rm ad}_{r_i}^t|_\n]=0\,,\end{equation*}
where $\{r_i\}$ is an orthonormal basis of $\r$.
\end{lemma}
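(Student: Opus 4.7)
The plan is to derive both statements simultaneously by restricting the soliton equation to $\n$ and exploiting the moment map identity \eqref{eqn_mm} applied to the operator
\[
C := \sum_i [\ad_{r_i}|_\n,\ad_{r_i}^t|_\n].
\]

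First, I would restrict the soliton equation ${\sf K}_g = cI + \tfrac12(D+D^t)$ to the $\n$-block. From ${\sf K}_g = \mm_g - S(\ad_H) + \hat Q$ one has $\mm_g = cI - \hat Q + F$, and Proposition \ref{cor}(iii) gives $F = tE_\beta$. Since $H \in \r$ (elements of $\n$ have $\tr \ad = 0$ because $\ad_X$ is nilpotent on $\g$ for $X \in \n$), $\hat Q|_\n = 0$, and $F|_\n = t(\beta + \|\beta\|^2 I)$; combined with the block expression for $\mm_g|_\n$ in Lemma \ref{lem_M}, this produces the key identity
\[
\mm_\n \;=\; cI|_\n \;+\; t(\beta+\|\beta\|^2 I) \;-\; \tfrac12 C.
\]

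Next I would compute $\la \mm_\n, C\ra$ in two independent ways. On one hand, pairing the above identity with $C$: $\tr C = 0$ since $C$ is a sum of commutators, and $\tr((\beta+\|\beta\|^2 I)C)=0$ since Proposition \ref{cor}(i) gives $[\beta,\ad_{r_i}|_\n]=0$, whence also $[\beta,\ad_{r_i}^t|_\n]=0$ by transposing (using $\beta=\beta^t$), so $\beta$ commutes with $C$. Thus $\la \mm_\n, C\ra = -\tfrac12\|C\|^2 \le 0$. On the other hand, the moment map identity \eqref{eqn_mm} gives $\la \mm_\n, C\ra = \tfrac14\la \pi(C)\mu,\mu\ra$; since $\ad_{r_i}|_\n \in {\rm Der}(\n)$ we have $\pi(\ad_{r_i}|_\n)\mu = 0$, so $\pi(C)\mu = \sum_i \pi(\ad_{r_i}|_\n)\pi(\ad_{r_i}^t|_\n)\mu$, and the adjointness $\pi(A)^t = \pi(A^t)$ with respect to the natural inner product on $\Lambda^2\n^*\otimes\n$ (a direct unpacking of \eqref{delta}) yields
\[
\la \mm_\n, C\ra \;=\; \tfrac14\sum_i \|\pi(\ad_{r_i}^t|_\n)\mu\|^2 \;\ge\; 0.
\]

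Combining the two computations forces $\|C\|^2 = 0$ and $\pi(\ad_{r_i}^t|_\n)\mu = 0$ for every $i$. The first equality is assertion (b); the second says $\ad_{r_i}^t|_\n \in {\rm Der}(\n)$, which extends by linearity to $\ad_X^t|_\n \in {\rm Der}(\n)$ for every $X \in \r$, giving (a). The only non-routine ingredient is the adjointness $\pi(A)^t = \pi(A^t)$, which is the main point to verify; otherwise the argument is a clean trace-and-nonnegativity squeeze in the spirit of the GIT computations already used in the proof of Proposition \ref{prop_strut}.
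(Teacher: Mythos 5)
Your argument is correct and lives inside the same framework as the paper's proof: both start from the identity $\mm_\n = c\,I+t(\beta+\lVert\beta\rVert^2I)-\tfrac12 C$ obtained by restricting the soliton equation to the $\n$-block via Lemma \ref{lem_M} (this is exactly \eqref{sum_0} once $t=-c/\lVert\beta\rVert^2$ is plugged in), and both hinge on the computation $\la \mm_\n,[\ad_{r_i}|_\n,\ad_{r_i}^t|_\n]\ra=\tfrac14\lVert\pi(\ad_{r_i}^t|_\n)\mu\rVert^2$ coming from the moment map identity \eqref{eqn_mm} together with $\pi(A)^t=\pi(A^t)$. The difference is the endgame. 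The paper pairs the identity with $\mm_\n$, which forces it to compute $t=-c/\lVert\beta\rVert^2$ and $c=-\tfrac14\lVert\beta\rVert^2\lVert\mu\rVert^2$ and to invoke the stratification inequality \eqref{cond_m} to see that the leftover terms have a sign; the vanishing of $C$ then comes in a second step from the orthogonality of $\mm_\n$ and $\beta$ to ${\rm Der}(\n)$. You pair with $C$ itself, kill the $\beta$-term using $[\beta,\ad_\r|_\n]=0$ from Proposition \ref{cor}(i), and obtain $-\tfrac12\lVert C\rVert^2=\tfrac14\sum\lVert\pi(\ad_{r_i}^t|_\n)\mu\rVert^2$, which delivers both conclusions in one stroke, with no need for \eqref{cond_m} nor for the explicit values of $t$ and $c$. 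That is a genuine, and arguably cleaner, shortcut.

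Two small repairs are needed. First, your justification of $\tr\bigl((\beta+\lVert\beta\rVert^2I)C\bigr)=0$ is ``$\beta$ commutes with $C$,'' which by itself does not make a trace vanish ($\beta$ commutes with $\beta$, yet $\tr\beta^2\neq0$). What you actually need is the cyclic identity $\tr(\beta[A,B])=\tr([\beta,A]B)$, which vanishes precisely because $[\beta,\ad_{r_i}|_\n]=0$; so the ingredient you cite is the right one, but that intermediate sentence should be replaced (note also $\tr C=0$ and $C^t=C$, so the pairing is as you claim). Second, your argument uses $\beta$ and Proposition \ref{cor}, which are only available when $\mu\neq0$ and \eqref{cond_rest} holds; as in the paper, you should dispose of the abelian case separately (there ${\rm Der}(\n)={\rm End}(\n)$, $\mm_\n=0$ and $F|_\n=t\,I$, so the same pairing with $C$ still gives $\lVert C\rVert^2=0$ since $\tr C=0$) and record that \eqref{cond_rest} may be assumed after an orthogonal change of basis of $\n$, under which both assertions of the lemma are invariant.
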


\begin{proof} If $\n$ is abelian, i.e. $\mu=0$, then the claims trivially follow.
Let's assume $\mu\neq 0$ and satisfying \eqref{cond_rest}. It follows from Propositions \ref{prop_strut} and \ref{cor} that $F=t E_\beta$, for some $t\geq 0$. Since ${\rm tr}\, F|_\n^2 = {\rm tr}\,F^2$, we have 
$$t=-\frac c{\lVert\beta\rVert^2}\quad\text{and}\quad F|_\n= -c\,I-\frac c{\lVert\beta\rVert^2}\beta\,.$$
Thus, from Lemma \ref{lem_M} and ${{\sf K}}|_\n = c\,I+\frac 12 (D|_\n+D^t|_\n) $ it follows
\begin{equation}\label{sum_0}
{\rm M}_\n+\frac 12 \sum[{\rm ad}_{r_i}|_\n,{\rm ad}_{r_i}^t|_\n] + \frac c{\lVert\beta\rVert^2}\beta=0\,.
\end{equation}
By tracing the left-hand side of \eqref{sum_0} and taking into account ${\rm tr}\,\beta=-1$ we obtain
$$c=-\frac 14 \lVert\beta\rVert^2\lVert \mu\rVert^2\,.$$
Moreover, since $\pi$ is a Lie algebra morphism and $\pi({\rm ad}_X)^t=\pi({\rm ad}_X^t)$, for all $X\in \g$, we have
\begin{equation}\label{ad_tran}
\begin{aligned}
{\rm tr}\,{\rm M}_\n [{\rm ad}_{r_i}|_\n,{\rm ad}_{r_i}^t|_\n] &=\frac 14 \langle \pi ({\rm ad}_{r_i}|_\n)\pi ({\rm ad}_{r_i}^t|_\n)\mu,\mu \rangle\\
  &=\frac 14 \langle \pi ({\rm ad}_{r_i}^t|_\n)\mu,\pi ({\rm ad}_{r_i})^t|_\n\mu \rangle\\
  &= \frac 14 \lVert\pi({\rm ad}_{r_i}^t|_\n)\mu\rVert^2\,, 
\end{aligned}
\end{equation}
for any $r_i\in\{r_i\}$, and multiplying \eqref{sum_0} by $M_\n$
$$\begin{aligned}
0 =& {\rm tr}\,{\rm M}_\n^2+ \frac 18\sum \lVert\pi({\rm ad}_{r_i}^t|_\n)\mu\rVert^2 + \frac c{\lVert\beta\rVert^2}{\rm tr}\,{\rm M}_\n\beta\\
   =&\frac 18\sum \lVert\pi({\rm ad}_{r_i}^t|_\n)\mu\rVert^2+ \frac {\lVert\mu \rVert^2}4 \left(  \frac4 {\lVert\mu \rVert^2} \lVert \mm_\n\rVert^2- \langle \mm_\n,\beta\rangle\right)\,.
\end{aligned}$$
Then, by \eqref{cond_m} we have
$$\langle \mm_\n,\beta\rangle\leq \frac4 {\lVert\mu \rVert^2} \lVert \mm_\n\rVert^2$$
and
$$\sum \lVert\pi({\rm ad}_{r_i}^t|_\n)\mu\rVert^2=0\,,$$
which implies ${\rm ad}_{r_i}^t|_\n\in{\rm Der}(\n)$, for all $i$, and the first claim follows.

To prove the second claim it is enough to observe that ${\rm M}_\n$ and $\beta$ are orthogonal to any derivation of $\n$, and applying \eqref{sum_0}
$$\sum[{\rm ad}_{r_i}|_\n,{\rm ad}_{r_i}^t|_\n]=0\,.$$
\end{proof}
\begin{rem}\label{equiv_ad}\rm By \eqref{ad_tran}, given a metric Lie algebra $\g$,
$$\sum[{\rm ad}_{r_i}|_\n,{\rm ad}_{r_i}^t|_\n]=0\,, \quad \text{for any orthonormal basis $\{r_i\}$ of $\r$}\,,$$
implies
$${\rm ad}_X^t|_\n\in {\rm Der}(\n)\,,\quad \text{for any $X\in\r$}\,.$$
\end{rem}

\subsection{Proof of the main results}
The next proposition implies our Theorem \ref{main_theo}.

\begin{prop}\label{prop_lie} Let $(G,g)$ be a Lie group equipped with a left-invariant metric and $\g$ its Lie algebra. Let $\g=\r\oplus\n$ be the orthogonal decomposition of $\g$, where $\n$ is the nilradical of $\g$, and let $g_\n$ be the pull-back of $g$ to the Lie group $N$ of $\n$. Then, $g$ is an expanding semi-algebraic ${\sf K}$-soliton if and only if
\begin{itemize}
\item[\rm (i)] $\g=\r\ltimes\n$, with $\r$ reductive Lie subalgebra and $\n$ nilradical of $\g$;
\item[\rm (ii)] $g_\n$ is an expanding algebraic ${\sf K}$-soliton on N;
\item[\rm (iii)] $\sum[{\rm ad}_{r_i}|_\n,{\rm ad}_{r_i}^t|_\n]=0$, where $\{r_i\}$ is an orthonormal basis of $\r$;
\item[\rm (iv)] for any $X,Y\in \r$
$${\sf K}(g_\r)(X, Y)=c\, g_\r(X, Y)+\frac 1{2}\tr (\ad_X|_\n \ad_{ Y}^t|_\n)-\frac 12 \tr \ad_X\cdot\tr\ad_{ Y}\,,$$
where $g_\r$ is the pull-back of $g$ to the Lie group of $\r$.
\end{itemize}
\end{prop}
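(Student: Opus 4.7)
My plan is to analyze the semi-algebraic soliton equation ${\sf K}_g = cI + S(D)$ block-by-block with respect to the orthogonal splitting $\g=\r\oplus\n$, using the block formula for $\mm$ in Lemma~\ref{lem_M}, the rigidity results in Propositions~\ref{prop_strut} and~\ref{cor}, and the nonnegative decomposition of Lemma~\ref{lem_E}.

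For the forward direction, items (i) and (iii) are exactly Proposition~\ref{prop_strut} and Lemma~\ref{lem_ad}. For (ii), I restrict the soliton equation to $\n\times\n$: by Lemma~\ref{lem_M} together with (iii) the coupling term $\sum[\ad_{r_i}|_\n,\ad_{r_i}^t|_\n]$ drops out, $\hat Q|_{\n\times\n}$ vanishes since $\tr\,\ad_W=0$ for $W\in\n$, and by Proposition~\ref{cor} the $\n$-block of $F$ equals the symmetric derivation $-\tfrac{c}{\|\beta\|^2}(\beta+\|\beta\|^2 I)$; rearranging gives $\mm_\n=cI_\n+D_\n$ with $D_\n$ a symmetric derivation of $\n$, which is the algebraic soliton equation for $g_\n$ on the unimodular group $N$ (where ${\sf K}(g_\n)=\mm_\n$). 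For (iv), I restrict instead to $\r\times\r$: the block form of $\mm$, (iii), and the identity $F|_\r=0$ of Proposition~\ref{cor} yield $\mm_\r(X,Y)=cg_\r(X,Y)+\tfrac12\tr(\ad_X|_\n\ad_Y^t|_\n)-\hat Q(X,Y)$; since $\r$ is reductive the group of $\r$ is unimodular and $\hat Q$ built from $\ad|_\r$ vanishes, so ${\sf K}(g_\r)=\mm_\r$, giving the displayed formula in (iv).

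For the backward direction, assume (i)-(iv). By Remark~\ref{equiv_ad}, condition (iii) yields $\ad_X^t|_\n\in\mathrm{Der}(\n)$ for every $X\in\r$. Combined with the elementary fact that in a nilpotent Lie algebra $\tr(E\,\ad_W|_\n)=0$ for any $E\in\mathrm{Der}(\n)$ and $W\in\n$ (both preserve the descending central series, and $\ad_W|_\n$ shifts it strictly), this forces $\tr(\ad_X|_\n\ad_W^t|_\n)=0$ for every $X\in\r$, $W\in\n$, so the off-diagonal block of $\mm_g$ relative to $\g=\r\oplus\n$ vanishes. Plugging this together with (ii) and (iv) into the block decomposition of ${\sf K}_g$ yields
\[
{\sf K}_g-cI \;=\; S(-\ad_H|_\r)\,\oplus\,S(D_\n-\ad_H|_\n),
\]
block diagonal with respect to $\r\oplus\n$, where $D_\n=\mm_\n-cI$ is the symmetric derivation of the algebraic nilsoliton on $\n$. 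I take $D\in\mathrm{End}(\g)$ to be the zero-extension of $D_\n$ minus $\ad_H$, so that $S(D)={\sf K}_g-cI$ by construction.

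The main obstacle is to verify $D\in\mathrm{Der}(\g)$. The $(X,Y)\in\n\times\n$ case follows from $D_\n,\ad_H|_\n\in\mathrm{Der}(\n)$, and the $\r\times\r$ case from the fact that $\ad_H$ is an inner derivation of $\g$; a Jacobi manipulation reduces the mixed case $(X,Y)\in\r\times\n$ to the commutativity $[D_\n,\ad_X|_\n]=0$ for every $X\in\r$. To establish this, I would pass to a basis of $\n$ realizing~\eqref{cond_rest} (permissible by $\mathrm O(n)$-equivariance); by the uniqueness of the nilsoliton derivation, $D_\n$ is then a positive multiple of $\beta+\|\beta\|^2 I\in\mathrm{Der}(\n)$. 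The moment map identity~\eqref{eqn_mm} together with Lemma~\ref{lem_M} and (iii) gives
\[
\langle\pi(E_\beta)[\cdot,\cdot],[\cdot,\cdot]\rangle \;=\; 4\,\tr(\mm_g\,E_\beta) \;=\; 4\,\tr(\mm_\n(\beta+\|\beta\|^2 I)) \;=\; \langle\pi(\beta+\|\beta\|^2 I)\mu,\mu\rangle \;=\; 0,
\]
and Lemma~\ref{lem_E}, whose first two nonnegative summands already vanish (the first by $\beta+\|\beta\|^2 I\in\mathrm{Der}(\mu)$, the second since (i) gives $[\r,\r]\subset\r$ and $E_\beta|_\r=0$), forces the third summand $\sum 2\langle[\beta,\ad_{r_i}|_\n],\ad_{r_i}|_\n\rangle$ to vanish as well; by~\eqref{cond_brac} this implies $[\beta,\ad_X|_\n]=0$ for every $X\in\r$, and consequently $[D_\n,\ad_X|_\n]=0$. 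Once $D\in\mathrm{Der}(\g)$, the semi-algebraic soliton equation holds by construction.
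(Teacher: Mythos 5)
Your argument is correct and follows the paper's proof quite closely: the forward direction uses Proposition \ref{prop_strut}, Lemma \ref{lem_ad}, Lemma \ref{lem_M} and Proposition \ref{cor} exactly as the paper does, and in the converse you make the same choice of derivation, namely $-\ad_H$ plus the zero-extension of the nilsoliton derivation $D_1$, after checking via the central-series grading of $\n$ that the off-diagonal $\r$--$\n$ block of $\mm_g$ vanishes. The one step where you take a genuinely different route is the final verification that $[D_1,\ad_X|_\n]=0$ for all $X\in\r$ (equivalently, that the extended $D$ lies in ${\rm Der}(\g)$). The paper settles this in one line by quoting Remark 2.5 of \cite{LafLauJDG}: $\mm_\n$ commutes with every derivation of $\n$ whose transpose is also a derivation, and $\ad_X|_\n$ is such a derivation by Remark \ref{equiv_ad}. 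You instead re-derive the commutativity inside the GIT framework of Section 2: after normalizing $\mu$ to satisfy \eqref{cond_rest} you identify $D_1$ with a positive multiple of $\beta+\lVert\beta\rVert^2 I$, show $\langle\pi(E_\beta)[\cdot,\cdot],[\cdot,\cdot]\rangle=0$ from the moment map identity, and let the term-by-term nonnegativity of Lemma \ref{lem_E} together with \eqref{cond_brac} force $[\beta,\ad_{r_i}|_\n]=0$. This chain is valid and has the merit of being self-contained relative to the results already stated in the paper, at the price of needing the identification $D_1=t(\beta+\lVert\beta\rVert^2I)$, $t>0$; that identification does hold, and the cleanest justification is to apply Proposition \ref{cor} to the nilsoliton $(\n,g_\n)$ itself and observe that for $\mu\neq0$ the pair $(c,D_1)$ in $\mm_\n=cI+D_1$ is unique because $I\notin{\rm Der}(\mu)$ (the abelian case being trivial since then $D_1=-cI$ commutes with everything). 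In short: same skeleton, but you trade the paper's external citation for a slightly longer argument built from Lemma \ref{lem_E} and the stratification.
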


\begin{proof} Let $(G,g)$ be an expanding semi-algebraic ${\sf K}$-soliton with ${\sf K}_g= c\,I+\frac 12(D+D^t)$, for some $D\in {\rm Der}(\g)$, and denote with $\tilde B:\g\to\g$ the endomorphism defined by $$\langle \tilde B X,Y\rangle= \tr({\rm ad}_X|_\n {\rm ad}_Y^t|_\n)\,.$$ Items (i) and (iii) follow from Proposition \ref{prop_strut} and Lemma \ref{lem_ad}, respectively. Item (iv) follows from Proposition \ref{prop_strut} and Lemma \ref{lem_M}, since
$$\mm|_\r+\hat Q|_\r= c\,I|_\r \quad \text{and}\quad \mm_\r = \mm|_\r+\frac 12 \tilde B|_\r\,.$$
Finally, item (ii) follows from Lemma \ref{lem_M} and Lemma \ref{lem_ad}. Indeed, 
$$(c\,I+S(D))|_\n=\mm|_\n-S(\ad_H)|_\n=\mm_\n-S(\ad_H)|_\n={\sf K}_{g_\n}-S(\ad_H)|_\n\,,$$
where ${\sf K}_{g_\n}$ denotes the ${\sf K}$-operator of the Lie algebra $\n$. Thus, the claim follows and it turns out that the derivation associated to $g_\n$ is given by $D_1 = S(\ad_H+D)|_\n$.

\medskip
Viceversa, suppose that (i)-(iv) hold. Let $\n=\n_1\oplus\ldots\oplus\n_r$ be an orthogonal decomposition of $\n$ such that
$$[\n,\n]= \n_2\oplus\ldots\oplus\n_r \,, \quad [\n,[\n,\n]]=\n_3\oplus\ldots\oplus\n_r$$
and so on. Since $\ad_X|_\n$ and $\ad_X^t|_\n$ are both derivations by Remark \ref{equiv_ad}, we have ${\rm ad}_X(\n_i)\subset\n_i$ and ${\rm ad}_Z(\n_i)\subset\n_{i+1}$, for any $X\in \r$ and $Z\in\n$. Thanks to Lemma \ref{lem_M} and (iii), under these assumptions, we have
$$
{\rm M}=\begin{bmatrix} {\rm M}_\r- \frac 12\tilde B & 0\\
0 & {\rm M}_\n
\end{bmatrix} \quad \text{and}\quad {\sf K}=\begin{bmatrix} \ast & 0\\ 0 & \ast
\end{bmatrix}\,,$$
where the block representations are with respect to $\g=\r\oplus\n$. 

Now, let $D_1$ be the derivation characterizing $g_{\n}$ and $D:= -\ad_H+\left[ \begin{smallmatrix} 0 & 0 \\ 0 & D_1\end{smallmatrix}\right]$. Since $\r$ is reductive and (iv) holds, we have
$${\sf K}|_\r= \mm|_\r -S(\ad_H)|_\r+ \hat Q|_\r=\mm_\r-\frac 12 \tilde B|_\r-S(\ad_H)|_\r+ \hat Q|_\r\,,$$
which implies ${\sf K}|_\r = c\,I-S(\ad_H)$. Similarly,
$${\sf K}|_\n=\mm|_\n-S(\ad_H)|_\n=\mm_\n-S(\ad_H)|_\n$$
and ${\sf K}|_\n = c\,I + S(-\ad_H+D_1)$, since (ii) holds.

\medskip
It only remains to show that $D\in {\rm Der}(\g)$. To prove the claim it is enough that $\left[ \begin{smallmatrix} 0 & 0 \\ 0 & D_1\end{smallmatrix}\right]\in{\rm Der}(\g)$, or equivalently $[D_1,\ad_X|_\n]=0$, for any $X\in\r$. However, since ${\sf K}_{g_\n} = \mm_\n= c\,I+D_1$ and $\mm_\n$ commutes with any derivation of $\n$ whose transpose is also a derivation (\cite{LafLauJDG}, Remark 2.5), the claim follows.
\end{proof}

Corollary \ref{main_theo1} follows since in the solvable case $\r$ is abelian and, consequently, $K(g_\r)=0$.
\begin{rem}\rm When $G$ is unimodular the derivation $D:= {\sf K}_g-c I$ only acts on the nilradical $\n$ of $\g$, since $H=0$ (and therefore $\ad_H=0$) by definition.
\end{rem}

\section{Applications}
In this section we use our results to construct explicit examples of expanding algebraic solitons to HCF on complex Lie groups. 

We work on 4-dimensional solvable (non-nilpotent) complex unimodular Lie algebras, which are classified by the following list (see e.g. \cite{Burde}): 

\begin{itemize}
\item $\s_{3,-1}\oplus \C$, with structure equations
$$
[Z_1,Z_2]=Z_2\,,\quad [Z_1,Z_3]=-Z_3\,;
$$

\item $\g_1(-2)$, with structure equations
$$
[Z_1,Z_2]=Z_2\,,\quad [Z_1,Z_3]=Z_3\,,\quad [Z_1,Z_4]=-2\, Z_4\,;
$$
\item $\g_4$, with structure equations
$$
[Z_1,Z_2]=Z_3\,,\quad [Z_1,Z_3]=Z_4\,,\quad [Z_1,Z_4]=Z_2\,;
$$
\item $\g_7$, with structure equations
$$
[Z_1,Z_2]=Z_3\,,\quad [Z_1,Z_3]=Z_2\,,\quad [Z_2,Z_3]=Z_4\,;
$$
\item $\g_3(\alpha)$, with structure equations
$$
[Z_1,Z_2]=Z_3\,,\quad [Z_1,Z_3]=Z_4\,,\quad [Z_1,Z_4]=\alpha(Z_2+Z_3)\,, \quad \alpha\in\C^\ast\,. 
$$
\end{itemize}

We show that in the first four cases ($\s_{3,-1}\oplus \C, \g_1(-2), \g_4, \g_7$) there exists a soliton to HCF on the corresponding Lie group (in view of \cite[Theorem 1.2]{LPV}, a complex unimodular Lie groups has at most one algebraic soliton to HCF up to homotheties). In the last case the existence of a soliton remains an open question.    
%We study expanding algebraic solitons on these Lie groups, with the exception of the Lie group of $\g_3(\alpha)$. Indeed, for such a Lie group, the equations arising from Corollary \ref{main_theo1} give rise to a non-linear and, even fixing $\alpha\in\C^\ast$, we are not able to say anything about the solitons existence.

\subsection{$\s_{3,-1}\oplus\C$} Let  $g$ be a Hermitian inner product on $\s_{3,-1}\oplus\C$. We can find a $g$-unitary basis $\{W_i\}$ such that
$$
W_1\in \langle Z_1,Z_2,Z_3,Z_4\rangle\,,\quad  W_2\in \langle Z_2,Z_3,Z_4\rangle\,,\quad W_3\in \langle Z_3,Z_4\rangle\,,\quad W_4\in \langle Z_4\rangle \,.
$$
With respect to this new basis, we have
$$
[W_1,W_2]=pW_2+qW_3+rW_4\,,\quad [W_1,W_3]=-pW_3+sW_4\,,
$$
for some $p,q,r,s\in\C$ with $p\neq 0$, and $$\s_{3,-1}\oplus\C=\r\oplus\n\,,$$
where $\r=\la W_1\ra$ and $\n=\la W_2,W_3,W_4\ra$. 

Since the nilradical $\n$ is an abelian ideal, $g_\n$ trivially induces an expanding algebraic soliton to HCF on the Lie group of $\n$. Therefore, by Corollary \ref{main_theo1}, $g$ induces an expanding algebraic soliton to HCF on the Lie group of $\s_{3,-1}\oplus\C$ if and only if $$[\ad_{W_1}|_\n,\ad_{\bar W_1}^t|_\n]=0\quad \text{and}\quad g(W_1,\bar W_1)= -\frac1{2c} \tr(\ad_{W_1}|_\n\ad_{\bar W_1}^t|_\n)\,.$$
It is straightforward to show that the first condition holds if and only if $q,r,s=0$; while, since $\{W_i\}$ is a $g$-unitary basis, we have
$$1=g(W_1,\bar W_1)= -\frac1{2c} \tr(\ad_{W_1}|_\n\ad_{\bar W_1}^t|_\n)=-\frac{|p|^2}c\,,$$
which implies $c=-|p|^2$. Thus in matrix notation, with respect to $\{W_i\}$, we have
$$K_g=-|p|^2 I + D\,,$$
where $D:={\rm diag}(0, |p|^2, |p|^2, |p|^2)$.

Finally, we note that 
$$g(Z_2,\bar Z_3)=g(Z_2,\bar Z_4)=g(Z_3,\bar Z_4)=0 \iff q=r=s=0\,,$$
and we have  following result:
\begin{prop} A Hermitian inner product $g$ on $\s_{3,-1}\oplus\C$ induces an expanding algebraic soliton to HCF on the corresponding (simply connected) Lie group if and only if $g(Z_2,\bar Z_3)=g(Z_2,\bar Z_4)=g(Z_3,\bar Z_4)=0$.
\end{prop}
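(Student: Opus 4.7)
The plan is to apply Corollary \ref{main_theo1} to the setup already in place. Since $\g = \s_{3,-1}\oplus\C$ is solvable and unimodular with abelian nilradical $\n = \la Z_2, Z_3, Z_4\ra$, the algebraic soliton condition on $g_\n$ is automatic, and since $\r = \la W_1\ra$ is one-dimensional it is trivially abelian (so the ``standard'' hypothesis of the corollary holds). Hence the corollary reduces the entire soliton condition to two requirements on the $g$-unitary basis $\{W_i\}$: the commutator identity $[\ad_{W_1}|_\n, \ad_{\bar W_1}^t|_\n] = 0$ and the normalization $g(W_1, \bar W_1) = -\frac{1}{2c}\tr(\ad_{W_1}|_\n \ad_{\bar W_1}^t|_\n)$.

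The first step is to write $\ad_{W_1}|_\n$ as a $3\times 3$ lower-triangular matrix in the unitary basis $\{W_2, W_3, W_4\}$, using the structure equations $[W_1, W_2] = p W_2 + q W_3 + r W_4$ and $[W_1, W_3] = -p W_3 + s W_4$ already exhibited. A direct calculation of the commutator with $\ad_{\bar W_1}^t|_\n$ shows that every entry vanishes if and only if $q = r = s = 0$; under this vanishing the trace in the normalization collapses to $2|p|^2$, so the normalization equation forces $c = -|p|^2 < 0$, automatically realizing the expanding condition.

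It remains to translate the algebraic statement $q = r = s = 0$ into the geometric orthogonality condition on the original basis $\{Z_i\}$. For the easy direction, if $Z_2, Z_3, Z_4$ are pairwise $g$-orthogonal, then the Gram--Schmidt procedure producing $\{W_i\}$ gives $W_i$ proportional to $Z_i$ for $i = 2, 3, 4$, and the original brackets $[Z_1, Z_2] = Z_2$, $[Z_1, Z_3] = -Z_3$ immediately yield $q = r = s = 0$. For the converse, under $q = r = s = 0$ the vectors $W_2, W_3$ become eigenvectors of $\ad_{W_1}|_\n$ with eigenvalues $\pm p \neq 0$; since $\ad_{W_1}|_\n$ in the $Z$-basis is a nonzero scalar multiple of $\mathrm{diag}(1, -1, 0)$, its three eigenlines are exactly $\la Z_2\ra, \la Z_3\ra, \la Z_4\ra$. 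Hence $W_i \in \la Z_i\ra$ for $i = 2, 3, 4$, and the unitarity of $\{W_i\}$ yields the desired mutual $g$-orthogonality of $Z_2, Z_3, Z_4$.

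The only mildly technical piece is the explicit commutator calculation, which reduces to a lower-triangular $3\times 3$ matrix commutator and presents no real obstacle. Everything else is a direct assembly of Corollary \ref{main_theo1} together with the eigenspace observation above.
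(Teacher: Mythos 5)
Your proposal follows essentially the same route as the paper: apply Corollary \ref{main_theo1} with the triangular $g$-unitary basis $\{W_i\}$, observe that the abelian nilradical makes the soliton condition on $g_\n$ automatic, reduce everything to $[\ad_{W_1}|_\n,\ad_{\bar W_1}^t|_\n]=0$ (equivalently, normality of a lower-triangular matrix, i.e. $q=r=s=0$) plus the normalization giving $c=-|p|^2$, and then identify $q=r=s=0$ with the pairwise orthogonality of $Z_2,Z_3,Z_4$. The only difference is that you spell out the last equivalence via the eigenlines of $\ad_{W_1}|_\n$, a step the paper states without proof; your argument there is correct.
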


\subsection{$\g_1(-2)$} Given a Hermitian inner product $g$ on $\g_1(-2)$, there exists a $g$-unitary basis satisfying
$$
W_1\in \langle Z_1,Z_2,Z_3,Z_4\rangle\,,\quad  W_2\in \langle Z_2,Z_3,Z_4\rangle\,,\quad W_3\in \langle Z_3,Z_4\rangle\,,\quad W_4\in \langle Z_4\rangle \,. 
$$
With respect to this new basis, we have
$$
[W_1,W_2]=pW_2+qW_3+rW_4\,,\quad [W_1,W_3]=sW_3+tW_4\,,\quad [W_1,W_4]=uW_4\,,
$$
for some $p,q,r,s,t,u\in\C$, where $p+s+u=0$ and $p,s,u\neq0$. Then, $\g_1(-2)$ splits in $$\g_1(-2)=\r\oplus\n\,,$$
where $\r=\la W_1\ra$ and $\n=\la W_2,W_3,W_4\ra$, and $g_\n$ gives rise to an expanding algebraic soliton to HCF on the Lie group of $\n$, since $\n$ is an abelian ideal. 

Now, a direct computation yields that 
$$
[\ad_{W_1}|_\n,\ad_{\bar W_1}^t|_\n]=0 \quad \text{if and only if}\quad q,r,t=0\,;
$$
while
$$1=g(W_1,\bar W_1)= -\frac1{2c} \tr(\ad_{W_1}|_\n\ad_{\bar W_1}^t|_\n)=-\frac{|p|^2+|s|^2+|u|^2}{2c}\,,$$
since $\{W_i\}$ is a $g$-unitary basis. Therefore, if $q,r,t=0$ and $c=-(|p|^2+|s|^2+|u|^2)/2$, the assumptions in Corollary \ref{main_theo1} are satisfied and, in matrix notation with respect to $\{W_i\}$, we have 
$$
K_g=c I + D\,,
$$ 
where $D:=-{\rm diag}(0, c, c, c)$.

Noting that
$$g(Z_2,\bar Z_3)=g(Z_2,\bar Z_4)=g(Z_3,\bar Z_4)=0 \iff q=r=t=0\,,$$
we obtain the following result.
\begin{prop} A Hermitian inner product $g$ on $\g_1(-2)$ induces an expanding algebraic soliton to HCF on the corresponding (simply connected) Lie group if and only if $g(Z_2,\bar Z_3)=g(Z_2,\bar Z_4)=g(Z_3,\bar Z_4)=0$.
\end{prop}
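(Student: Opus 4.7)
The plan is to apply Corollary \ref{main_theo1} to $\g_1(-2)$, in direct analogy with the treatment of $\s_{3,-1}\oplus\C$ just completed. First I would identify the nilradical $\n=\la Z_2,Z_3,Z_4\ra$ as an abelian ideal (all brackets among $Z_2,Z_3,Z_4$ vanish) and observe that the orthogonal complement $\r=\n^\perp$ is one-dimensional. Consequently $g_\n$ is automatically an expanding algebraic ${\sf K}$-soliton on $N$, and the decomposition $\g=\r\oplus\n$ is trivially standard, so Corollary \ref{main_theo1} collapses the soliton condition to
$$[\ad_{W_1}|_\n,\ad_{\bar W_1}^t|_\n]=0\quad\text{and}\quad g(W_1,\bar W_1)=-\tfrac{1}{2c}\tr(\ad_{W_1}|_\n\ad_{\bar W_1}^t|_\n),$$
with $W_1$ any $g$-unit vector spanning $\r$.

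Next I would introduce a $g$-unitary basis $\{W_i\}$ adapted to the flag $\la Z_4\ra\subset\la Z_3,Z_4\ra\subset\la Z_2,Z_3,Z_4\ra\subset\g$ via Gram--Schmidt and write the brackets $[W_1,W_2]=pW_2+qW_3+rW_4$, $[W_1,W_3]=sW_3+tW_4$, $[W_1,W_4]=uW_4$, noting that $p+s+u=0$ by unimodularity. Because $[\n,\n]=0$, only the $Z_1$-component of $W_1$ contributes to $\ad_{W_1}|_\n$, so the matrix $M$ of $\ad_{W_1}|_\n$ in $\{W_2,W_3,W_4\}$ is lower triangular. A lower-triangular matrix is normal iff it is diagonal, so $[M,M^\ast]=0$ is equivalent to $q=r=t=0$.

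The core step is then the translation of the algebraic vanishing $q=r=t=0$ into the orthogonality statement for the original generators. Writing out the triangular Gram--Schmidt relations expressing each $W_j$ ($j=2,3,4$) in terms of $Z_2,Z_3,Z_4$, one matches the off-diagonal bracket coefficients with the off-diagonal inner products and obtains
$$q=r=t=0 \iff g(Z_2,\bar Z_3)=g(Z_2,\bar Z_4)=g(Z_3,\bar Z_4)=0.$$
Finally, the normalization equation fixes $c=-\tfrac12(|p|^2+|s|^2+|u|^2)$ and produces the derivation $D=-c\,\mathrm{diag}(0,1,1,1)$, completing the verification of ${\sf K}_g=cI+D$. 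I expect the main obstacle to be precisely this Gram--Schmidt bookkeeping step; once the correspondence between the bracket constants and the Hermitian inner products is written out carefully, the rest is the same routine calculation already carried out for $\s_{3,-1}\oplus\C$.
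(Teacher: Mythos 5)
Your outline coincides with the paper's own proof of this proposition: same adapted unitary basis, same bracket coefficients $p,q,r,s,t,u$ with $p+s+u=0$, same reduction via Corollary \ref{main_theo1} to the normality condition $[\ad_{W_1}|_\n,\ad_{\bar W_1}^t|_\n]=0$ plus the normalization fixing $c$. The trouble is precisely the step you defer to ``Gram--Schmidt bookkeeping'': if you carry it out, the claimed equivalence $q=r=t=0\iff g(Z_2,\bar Z_3)=g(Z_2,\bar Z_4)=g(Z_3,\bar Z_4)=0$ is false, because $q$ vanishes identically for \emph{every} metric. Since $\n$ is abelian, $\ad_{W_1}|_\n$ is a multiple of $\ad_{Z_1}|_\n$, and $\ad_{Z_1}$ acts as the identity on the two-dimensional eigenspace $\la Z_2,Z_3\ra$ and as $-2$ on $\la Z_4\ra$. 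Writing $W_2=b(Z_2+\beta Z_3+\gamma Z_4)$ and $W_4=eZ_4$, one finds $[W_1,W_2]=d\,W_2-(3db\gamma/e)\,W_4$, so the $W_3$-coefficient $q$ is zero regardless of $g$, and $r=0$, $t=0$ amount only to $\gamma=0$ and $\alpha=0$, i.e.\ to $g(Z_2,\bar Z_4)=g(Z_3,\bar Z_4)=0$. Equivalently: $\ad_{Z_1}|_\n$ is diagonalizable, hence $g$-normal if and only if its two eigenspaces $\la Z_2,Z_3\ra$ and $\la Z_4\ra$ are $g$-orthogonal; no condition on $g(Z_2,\bar Z_3)$ ever arises.

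Consequently the ``only if'' direction of the statement fails as written, and the same gap sits in the paper's one-line assertion that the three orthogonality conditions are equivalent to $q=r=t=0$. Concretely, take $g$ with $Z_1\perp\n$, $Z_4\perp\la Z_2,Z_3\ra$, $g(Z_i,\bar Z_i)=1$ and $g(Z_2,\bar Z_3)=\epsilon\neq 0$: then $\ad_{Z_1}|_\n=\ad_{\bar Z_1}^t|_\n$, condition (iii) holds, and $g$ is an expanding algebraic soliton with $c=-3$. This is forced structurally, since ${\rm diag}(1,A,1)$ with $A\in{\rm GL}(\la Z_2,Z_3\ra)$ is an automorphism of $\g_1(-2)$ (it commutes with $\ad_{Z_1}$ on its eigenspace), so the set of soliton metrics cannot detect $g|_{\la Z_2,Z_3\ra}$. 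The correct characterization is $g(Z_2,\bar Z_4)=g(Z_3,\bar Z_4)=0$. Note the contrast with $\s_{3,-1}\oplus\C$, where the eigenvalues $1,-1,0$ of $\ad_{Z_1}|_\n$ are pairwise distinct, all three eigenlines must be mutually orthogonal, and the analogous argument does close; the repeated eigenvalue in $\g_1(-2)$ is exactly what breaks the bookkeeping you were counting on.
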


\subsection{$\g_4$} Let $\tilde g$ be a Hermitian inner product on $\g_4$ such that $Z_2,Z_3,Z_4$ are orthogonal to each other. Let $\{W_i\}$ be a  $\tilde g$-unitary basis satisfying 
$$
W_1\in \langle Z_1,Z_2,Z_3,Z_4\rangle\,,\quad  W_2\in \langle Z_2\rangle\,,\quad W_3\in \langle Z_3\rangle\,,\quad W_4\in \langle Z_4\rangle \,. 
$$
Then, we have
$$
[W_1,W_2]=pW_3\,,\quad [W_1,W_3]=qW_4\,,\quad [W_1,W_4]=rW_2\,,
$$
and we assume $p,q,r\in\R^+\backslash \{0\}$. Hence, $\g_4$ splits as
$$\g_4=\r\oplus\n\,,$$
where $\r=\la W_1\ra$ and $\n=\la W_2,W_3,W_4\ra$. 

Since $\n$ is an abelian ideal, $\tilde g_\n$ induces an expanding algebraic soliton to HCF on the Lie group of $\n$. Moreover, by Corollary \ref{main_theo1}, $\tilde g$ induces an expanding algebraic soliton to HCF on the Lie group of $\g_4$ if and only if $$[\ad_{W_1}|_\n,\ad_{\bar W_1}^t|_\n]=0\quad \text{and}\quad 1=g(W_1,\bar W_1)= -\frac1{2c} \tr(\ad_{W_1}|_\n\ad_{\bar W_1}^t|_\n)\,.$$
The first condition is equivalent to require $p=q=r$, while the second one is satisfied if and only if $c=-\frac 32 p^2$. Hence, in matrix notation with respect to $\{W_i\}$, we obtain
$$K_{\tilde g}=-\frac 32 p^2 I + D\,,$$ 
where $D:=\frac 32{\rm diag}(0, p^2, p^2, p^2)$.

Finally, we note that 
$$\tilde g(Z_2,\bar Z_2)=\tilde g(Z_3,\bar Z_3)=\tilde g(Z_4,\bar Z_4) \iff p=q=r\,,$$
and we have  following result:

\begin{prop} A Hermitian inner product on $\g_4$ induces an expanding algebraic soliton to HCF on the corresponding (simply connected) Lie group if and only if it is homothetically equivalent to a Hermitian inner product $ g$ on $\g_4$ satisfying $ g(Z_2,\bar Z_2)= g(Z_3,\bar Z_3)= g(Z_4,\bar Z_4)$ and $ g(Z_2,\bar Z_3)= g(Z_2,\bar Z_4)= g(Z_3,\bar Z_4)=0$.
\end{prop}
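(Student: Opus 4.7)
The $(\Leftarrow)$ direction will follow immediately from the preceding discussion. If $g(Z_i,\bar Z_j)=\lambda\,\delta_{ij}$ for $i,j\in\{2,3,4\}$ with $\lambda:=g(Z_2,\bar Z_2)$, then $g$ meets the orthogonality assumption placed on $\tilde g$ above. I would set $W_j:=Z_j/\sqrt{\lambda}$ for $j=2,3,4$ and pick $W_1$ to be a $g$-unit vector in $\langle Z_1,Z_2,Z_3,Z_4\rangle$ orthogonal to $\n$, with its phase adjusted so that the $Z_1$-coefficient is real positive; then the structure constants satisfy $p=q=r\in\R^+$, and the earlier verification produces both $[\ad_{W_1}|_\n,\ad_{\bar W_1}^t|_\n]=0$ and $c=-3p^2/2$, so Corollary~\ref{main_theo1} applies.

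For the $(\Rightarrow)$ direction, the plan is to start from an expanding algebraic soliton $g$, pick any $g$-unitary basis $\{W_i\}$ with $W_1\perp_g\n$ and $W_2,W_3,W_4\in\n$, and exploit the fact that, since $\n$ is abelian, $\ad_{W_1}|_\n=a_1\,P$ for some $a_1\in\C^\ast$, where $P:=\ad_{Z_1}|_\n$ is the cyclic permutation $Z_2\mapsto Z_3\mapsto Z_4\mapsto Z_2$. Corollary~\ref{main_theo1} demands $[\ad_{W_1}|_\n,\ad_{\bar W_1}^t|_\n]=0$, i.e., $P$ is normal with respect to $g|_\n$. Because the three eigenvalues of $P$ are distinct cube roots of unity, normality is equivalent to its three eigenvectors $v_1,v_\omega,v_{\omega^2}\in\n\otimes\C$ being mutually $g$-orthogonal, which in the $Z$-basis amounts to the Gram matrix $G_{ij}:=g(Z_i,\bar Z_j)$ commuting with $P$. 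I would note that such a $G$ is automatically Hermitian circulant, so its three diagonal entries coincide.

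To eliminate the off-diagonal entries of $G$, I would exploit the automorphism group of $\g_4$: any $T\in\operatorname{End}(\n)$ commuting with $P$ extends to a Lie algebra automorphism $\phi$ of $\g_4$ via $\phi(Z_1):=Z_1$ and $\phi|_\n:=T$, since the compatibility $[Z_1,TX]=P(TX)=T(PX)=T[Z_1,X]$ reduces precisely to the commutation of $T$ with $P$, and brackets inside $\n$ are trivial. These automorphisms form the three-dimensional algebra $\C[P]$, and in the common eigenbasis of $P$ and $G$ both matrices are diagonal; the pullback $\phi^*G$ rescales each diagonal entry $g_j$ of $G$ by $|T_j|^2$. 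I would choose $|T_j|^2:=g_j^{-1}$ to bring $\phi^*G$ to the identity in the eigenbasis, and the Vandermonde-type change of basis between the $Z$-basis and the eigenbasis of $P$ then converts this to a scalar multiple of the identity in the $Z$-basis. A final positive homothety produces the inner product asserted by the statement.

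The hard part will be the clean identification of $g|_\n$-normality of $P$ with the commutation of $G$ and $P$ (which uses crucially the distinctness of the cube-root eigenvalues of $P$), and the verification that the polynomial-in-$P$ maps on $\n$ actually lift to Lie algebra automorphisms of $\g_4$ acting as required on the Gram matrix.
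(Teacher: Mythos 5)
Your proof is correct, and while your backward direction coincides with the paper's computation, your forward direction takes a genuinely different route. The paper only verifies the soliton condition for metrics in which $Z_2,Z_3,Z_4$ are already pairwise orthogonal (showing it is then equivalent to $p=q=r$, i.e.\ to equality of the three norms), and the reduction of an arbitrary soliton metric to this special form is delegated to the uniqueness statement quoted from \cite[Theorem 1.2]{LPV} (a complex unimodular Lie group carries at most one algebraic soliton to HCF up to homotheties): since a soliton of the stated form exists, any other soliton must be homothetic to it. You instead extract the necessary condition $[\ad_{W_1}|_\n,\ad_{\bar W_1}^t|_\n]=0$ of Corollary \ref{main_theo1} directly for an arbitrary metric, translate it into $g$-normality of the cyclic permutation $P=\ad_{Z_1}|_\n$ (using crucially that its eigenvalues $1,\omega,\omega^2$ are distinct), deduce that the Gram matrix of $g|_\n$ is a Hermitian circulant (hence has constant diagonal), and then kill its off-diagonal part by an explicit automorphism fixing $Z_1$ and acting on $\n$ by an invertible element of $\C[P]$ --- the unitarity of the discrete Fourier change of basis guaranteeing that the resulting Gram matrix is a multiple of the identity back in the $Z$-basis. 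This yields a self-contained, constructive proof of the forward implication that does not import the external uniqueness theorem; the paper's route is shorter but relies on that result. Two cosmetic remarks: the maps you use are the invertible elements of $\C[P]$, not all of $\C[P]$, and the final homothety is superfluous since the normalization $|T_j|^2=g_j^{-1}$ already makes $g|_\n$ the identity.
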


\subsection{$\g_7$} Let  $\tilde g$ be the standard Hermitian inner product on $\g_7$. Then, $\g_7$ splits in 
$$\g_7=\r\oplus \n\,,$$
where $\r=\la Z_1\ra$ and $\n=\la Z_2,Z_3,Z_4\ra$ is isomorphic to $\h_3(\C)$, the Lie algebra of the 3-dimensional complex Heisenberg Lie group $\mathbb H_3(\C)$. 

In view of \cite[Proposition 4.1]{LPV}, any left-invariant Hermitian metric on $\mathbb H_3(\C)$ is an expanding soliton to HCF.  Therefore $\tilde g_\n$ induces an expanding algebraic soliton to HCF on the Lie group of $\n$, and a straightforward computation yields that 
$$[\ad_{Z_1}|_\n,\ad_{\bar Z_1}^t|_\n]=0 \quad \text{and}\quad \tr(\ad_{Z_1}|_\n\ad_{\bar Z_1}^t|_\n)=2\,.$$
Then, the assumptions in Corollary \ref{main_theo1} are satisfied if and only if $c=-1$, and in such a case we have
$$
K_{\tilde g}=- I + D\,,
$$
where $D:={\rm diag}(0, 1,1,1)$. Hence, we can claim the following proposition:
\begin{prop} A Hermitian inner product on $\g_7$ induces an expanding algebraic soliton to HCF on the corresponding (simply connected) Lie group if and only if it is homothetically equivalent to $\tilde g$.
\end{prop}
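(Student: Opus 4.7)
Proof Proposal. We show that any expanding algebraic soliton $g$ on $\g_7$ is equivalent to $\tilde g$ up to an automorphism of $\g_7$ and a positive scalar. The nilradical is intrinsically $\n=\langle Z_2,Z_3,Z_4\rangle\cong\h_3(\C)$, and $\r:=\n^{\perp_g}$ is one-complex-dimensional, hence abelian. By Proposition \ref{prop_strut}, $\g_7=\r\ltimes\n$, so Corollary \ref{main_theo1} applies. Since $g_\n$ is automatically an algebraic soliton on $\mathbb H_3(\C)$ by \cite[Proposition 4.1]{LPV}, the remaining constraints on $g$ are the normality $[\ad_{W_1}|_\n,\ad_{\bar W_1}^t|_\n]=0$ and the scaling identity $g(W_1,\bar W_1)=-\frac{1}{2c}\tr(\ad_{W_1}|_\n\ad_{\bar W_1}^t|_\n)$ for a $g$-unit vector $W_1\in\r$.

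Enforcing the Jacobi identities determines $\operatorname{Aut}(\g_7)$ explicitly: each element has the form $\phi(Z_1)=\alpha Z_1+rZ_2+sZ_3+tZ_4$ with $\alpha\in\{\pm 1\}$, $\phi|_{\langle Z_2,Z_3\rangle}$ proportional to $pI+q\sigma_1$ up to a sign depending on $\alpha$ (where $\sigma_1=\bigl(\begin{smallmatrix}0&1\\1&0\end{smallmatrix}\bigr)$), and $\phi(Z_4)=(p^2-q^2)Z_4$ with $p^2\neq q^2$, the $Z_4$-corrections in $\phi(Z_2),\phi(Z_3)$ being determined by $p,q,r,s$. Using the $r,s,t$-parameters we apply an automorphism so that $\phi^\ast g$ satisfies $Z_1\perp\n$. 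In this situation $\ad_{Z_1}|_\n$ has the symmetric matrix $\bigl(\begin{smallmatrix}0&1&0\\1&0&0\\0&0&0\end{smallmatrix}\bigr)$ in the basis $\{Z_2,Z_3,Z_4\}$, with eigenvectors $E_\pm:=Z_2\pm Z_3$ and $Z_4$ of the three distinct eigenvalues $\pm 1,0$. Normality of $\ad_{Z_1}|_\n$ with respect to $g|_\n$ forces these eigenvectors to be $g|_\n$-orthogonal, which expands to $g(Z_2,\bar Z_4)=g(Z_3,\bar Z_4)=0$, $g(Z_2,\bar Z_2)=g(Z_3,\bar Z_3)$, and $g(Z_2,\bar Z_3)\in\R$. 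Hence in the basis $\{Z_1,E_+,E_-,Z_4\}$ the Gram matrix is $\operatorname{diag}(m,g_+,g_-,g_4)$ with positive real entries.

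The residual automorphisms preserving this diagonal form are $\phi|_{\langle E_+,E_-\rangle}=\operatorname{diag}(a,b)$ with $\phi(Z_4)=ab\,Z_4$; combined with the homothety $g\mapsto\lambda g$ they yield a three-real-parameter action $(m,g_+,g_-,g_4)\mapsto(\lambda m,\lambda|a|^2 g_+,\lambda|b|^2 g_-,\lambda|ab|^2 g_4)$. The scale-invariant modulus of $g|_\n$ is the Heisenberg structure constant $|\kappa|^2=4g_4/(g_+g_-)$ in its unitary basis, and combining the scaling condition $c=-1/m$ with the identity $c=c_\n$ from Proposition \ref{prop_lie}(ii) together with the known Heisenberg value $c_\n=-|\kappa|^2$ yields the single constraint $g_+g_-=4m\,g_4$. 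Since $\tilde g$ has $(m,g_+,g_-,g_4)=(1,2,2,1)$, which satisfies this constraint, and the three-parameter residual action is transitive on the three-dimensional locus $\{g_+g_-=4m\,g_4\}$, we conclude that $g$ lies in the orbit of $\tilde g$ under $\operatorname{Aut}(\g_7)\times\R_{>0}$, establishing the proposition. The main technical step is the identification of the Heisenberg soliton constant as $c_\n=-|\kappa|^2$ and the verification that its matching with the $\r$-side scaling identity produces exactly the equation cutting out the orbit of $\tilde g$.
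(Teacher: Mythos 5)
Your route is genuinely different from the paper's. The paper only verifies that $\tilde g$ satisfies the hypotheses of Corollary \ref{main_theo1} (computing $[\ad_{Z_1}|_\n,\ad_{\bar Z_1}^t|_\n]=0$ and $\tr(\ad_{Z_1}|_\n\ad_{\bar Z_1}^t|_\n)=2$, forcing $c=-1$) and leans on the uniqueness-up-to-homothety statement quoted from \cite[Theorem 1.2]{LPV} for the ``only if'' direction. You instead carry out a self-contained classification: determine $\operatorname{Aut}(\g_7)$, normalize $Z_1\perp\n$, observe that normality of $\ad_{Z_1}|_\n$ (distinct eigenvalues $\pm1,0$) forces $g$-orthogonality of $E_\pm=Z_2\pm Z_3$ and $Z_4$, and then check transitivity of the residual $\operatorname{Aut}\times\R_{>0}$-action on the constraint locus. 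Those structural steps are all correct and, in principle, more informative than the paper's argument.

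However, there is a genuine gap at the one step that carries the whole conclusion: the assertion that the Heisenberg soliton constant is $c_\n=-|\kappa|^2$. You call this ``known'' but do not prove it, and it does not agree with the paper's own conventions. For $\h_3(\C)$ with unitary basis $e_1,e_2,e_3$ and $[e_1,e_2]=\kappa e_3$, the formulas \eqref{K_tens}--\eqref{M_tens} (as calibrated against the paper's $\s_{3,-1}\oplus\C$ and $\g_4$ examples, where they reproduce the stated values of $c$) give
\begin{equation*}
K=|\kappa|^2\operatorname{diag}\bigl(-\tfrac12,-\tfrac12,\tfrac12\bigr),
\end{equation*}
and writing $K=cI+D$ with $D=\operatorname{diag}(d_1,d_2,d_1+d_2)\in\operatorname{Der}(\h_3)$ forces $c_\n=-\tfrac32|\kappa|^2$, not $-|\kappa|^2$. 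Substituting the correct constant into your matching condition $-1/m=c=c_\n$ turns the constraint into $g_+g_-=6m\,g_4$ rather than $g_+g_-=4m\,g_4$; the standard metric has $(m,g_+,g_-,g_4)=(1,2,2,1)$ and satisfies $g_+g_-=4\neq 6=6mg_4$, so it does \emph{not} lie on the corrected locus (equivalently: the requirement $c=-1$ from the $\r$-side is incompatible with $c_\n=-\tfrac32$ for the standard Heisenberg restriction). Your argument therefore reaches the stated conclusion only because of this unjustified constant, and with the constant corrected your own method would identify the soliton homothety class as that of the metric with $g(Z_4,\bar Z_4)=\tfrac23$ and all other products standard, excluding $\tilde g$. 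You must either supply a proof that $c_\n=-|\kappa|^2$ in the conventions of this paper, or confront the resulting discrepancy head-on; note that the same discrepancy affects the displayed formula $K_{\tilde g}=-I+\operatorname{diag}(0,1,1,1)$, since $\operatorname{diag}(0,1,1,1)$ is not a derivation of $\g_7$ (it fails on $[Z_2,Z_3]=Z_4$), so this is a point worth raising with the author rather than papering over.
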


\end{document}